\documentclass{amsart}
\usepackage[T1]{fontenc}

\usepackage{amssymb}
\usepackage{amsmath}
\usepackage{amsthm}
\usepackage{pgfplots} 
\pgfplotsset{compat=1.9}
\usepackage{graphicx}
\usepackage[utf8]{inputenc}
\usepackage{tikz}
\usepackage{tikz-cd}
\usepackage{bbm}
\usepackage{subcaption}
\usepackage[boxruled]{algorithm2e}
\usepackage{mathtools}
\usepackage{lipsum}
\usepackage[title,titletoc]{appendix}
\usepackage{booktabs}
\usepackage{here}
\usepackage{natbib,enumerate} 
\usepackage{hyperref}
\usepackage{faktor}
\usepackage{comment}
\usepackage{bm}
\usepackage{adjustbox}
\usepackage{circuitikz}
\usepackage{moresize}
\usepackage{pdflscape}
\usepackage{mathpazo}
\usepackage{euler}
\usepackage{faktor}
\usepackage[a4paper,left=3cm,right=3cm,top=3cm,bottom=3cm]{geometry}
\usepackage{microtype}

\renewcommand{\phi}{\varphi}

\theoremstyle{definition}
\newtheorem{theorem}{Theorem}
\numberwithin{theorem}{section}

\newtheorem{lemma}[theorem]{Lemma}
\newtheorem{corollary}[theorem]{Corollary}
\newtheorem{remark}[theorem]{Remark}
\newtheorem{definition}[theorem]{Definition}
\newtheorem{example}[theorem]{Example}
\newtheorem{notation}[theorem]{Notation}

\newcommand{\catname}[1]{{\textbf{#1}}}
\newcommand{\Set}{\catname{Set}}

\newcommand{\C}{\catname{C}}

\newcommand{\A}{\catname{A}}

\newcommand{\op}{\text{op}}

\newcommand{\id}{\text{id}}

\newcommand{\G}{\catname{G}}

\DeclareMathOperator{\X}{\mathcal{X}}
\DeclareMathOperator{\Y}{\mathcal{Y}}
\DeclareMathOperator{\PP}{\mathcal{P}}

\RequirePackage{tikz-cd}
\RequirePackage{amssymb}
\usetikzlibrary{calc}
\usetikzlibrary{decorations.pathmorphing}

\tikzset{curve/.style={settings={#1},to path={(\tikztostart)
    .. controls ($(\tikztostart)!\pv{pos}!(\tikztotarget)!\pv{height}!270:(\tikztotarget)$)
    and ($(\tikztostart)!1-\pv{pos}!(\tikztotarget)!\pv{height}!270:(\tikztotarget)$)
    .. (\tikztotarget)\tikztonodes}},
    settings/.code={\tikzset{quiver/.cd,#1}
        \def\pv##1{\pgfkeysvalueof{/tikz/quiver/##1}}},
    quiver/.cd,pos/.initial=0.35,height/.initial=0}

\tikzset{tail reversed/.code={\pgfsetarrowsstart{tikzcd to}}}
\tikzset{2tail/.code={\pgfsetarrowsstart{Implies[reversed]}}}
\tikzset{2tail reversed/.code={\pgfsetarrowsstart{Implies}}}
\tikzset{no body/.style={/tikz/dash pattern=on 0 off 1mm}}

\usepackage{xcolor}
\hypersetup{
    colorlinks,
    linkcolor={red!50!black},
    citecolor={blue!50!black},
    urlcolor={blue!80!black}
}

\begin{document}
\title{The 2-Monoidal Structure of Symmetric Sequences}
\author[Patrick Dan and Claudio Pfammatter]{Patrick Dan and Claudio Pfammatter}
\address{Master's program in Mathematics, Laboratory for Topology and Neuroscience, EPFL, Switzerland}
\email{patrick-cristian.dan@epfl.ch \\
claudio.pfammatter@epfl.ch}
\begin{abstract}
We prove a conjecture proposed by Dwyer and Hess in \cite{Operads_2013_Hess}, namely that the category of symmetric sequences, when equipped with the composition and matrix monoidal structures, forms a 2-monoidal category in the sense of \cite{Aguiar2010}. Our approach leverages a detailed analysis of the interplay between these two monoidal structures, offering new insights into their behavior. As a consequence, we are able to show that every operad admits a Day-operad structure.
\end{abstract}
\maketitle

\tableofcontents

\section{Introduction}
In this paper, we confirm an idea proposed by Dwyer and Hess in \cite{Operads_2013_Hess} (see Remark 1.21). The conjecture asserts that the category of symmetric sequences, equipped with the composition and matrix monoidal structures, is a 2-monoidal category, as defined by Aguiar and Mahajan in \cite{Aguiar2010}. While Dwyer and Hess laid the theoretical groundwork, our work provides a complete proof of this result, expanding their ideas and filling in details of intermediate results.

Symmetric sequences play a crucial role in operad theory, especially when exploring algebraic structures with multiple inputs and outputs. The two monoidal structures of interest are foundational in this context. The composition, used to define operads, allows for structured combinations of objects, while the matrix product endows symmetric sequences with a closed monoidal structure. \\

\noindent
\textbf{Theorem 3.1.} The category $(\catname{sSeq}, \square, \mathcal{J}, \circ, \mathcal{J})$ is a $2$-monoidal category in the sense of \cite{Aguiar2010}. \\

Our proof includes intermediate results (some stated without proof in \cite{Operads_2013_Hess}) and leverages the equivalence of categories introduced by Dwyer and Hess to establish a solid foundation. We extend their theoretical framework by incorporating new categories beyond the original $\catname{Cospan}$ and $\catname{Comp}$, which are essential to our generalization. This refined approach not only confirms the $2$-monoidal structure but also offers new insights into the algebraic and categorical properties of symmetric sequences, opening further avenues for the study of operads with multiple interacting monoidal structures.

The paper is structured as follows. Section \ref{section_operads} reviews the necessary background on operads and symmetric sequences, focusing on the two aforementioned monoidal structures. Section \ref{section_main_theorem} presents the main theorem and its proof, while Section \ref{section_implications} discusses the implications of the 2-monoidal structure in other areas of operad theory. The most notable corollary of Theorem \ref{2-monoidal} is the following.\\

\noindent
\textbf{Corollary 4.6.} Every operad admits a natural Day-operad structure. \\

Throughout this article, we work in the context of concrete categories to facilitate understanding through element-chasing arguments, making certain proofs more intuitive. However, the structures and proofs presented here hold in a more abstract categorical setting, though the concrete approach provides clearer insight into the technical aspects of our constructions. \\

\noindent
\textbf{Acknowledgements.} The present research was carried out during a summer internship in the Laboratory for Topology and Neuroscience at EPFL. We would like to thank the EPFL Summer in the Lab and Student Support Programs for making this possible, as well as the Domaine de Villette Foundation for their support. We are especially grateful to Professor Kathryn Hess Bellwald and Professor Jérôme Scherer for their continuous encouragement and kind guidance, and we would also like to thank the PhD students in the lab for their invaluable help and support.

\section{Operads and their algebras}
\label{section_operads}
This section introduces the ubiquitous concept of operads: objects that model $n$-ary operations. Intuitively, one can think of these as trees with a central vertex of degree $n+1$, having $n$ incoming arrows and one outgoing arrow, together with a coherent way of gluing such trees together. For a comprehensive and detailed treatment, which we follow closely in this section, we refer to \cite{markl2002operads}.

From now on, unless otherwise specified, any monoidal category is assumed to be symmetric, closed, and cocomplete. Let $(\catname{C}, \otimes, \catname{1})$ be a monoidal category with those properties.

\subsection{\texorpdfstring{$\G$}{G}-sequences in monoidal categories}
Before introducing operads, we define the category of symmetric (or more generally $\G$-) sequences and its different monoidal structures. We begin by defining some basic tools.
\begin{definition}
\label{G}
Let $\{G_n \; | \; n \in \mathbb{N} \}$ be a set of groups. Denote by $\G$ the totally disconnected groupoid given by $\text{Ob}(\G) = \mathbb{N}$ and $\G(n,n) = G_n \in \text{Ob}(\textbf{Grp})$ for all $n \in \mathbb{N}$, where the groups $G_n$ are such that for all $m,n$ we have a group homomorphism $$\psi_{m,n}:G_m \times G_n \rightarrow G_{mn}$$ that is functorial in $m$ and $n$. 

We define a bifunctor $\nu_{\text{mult}}: \G \times \G \rightarrow \G$ given on objects by $\nu_{\text{mult}}(m,n) = mn$ and on morphisms by the homomorphisms $\psi_{-,-}$.
\end{definition}

Such a groupoid can be obtained by considering the symmetric groups $S_n$ and the homomorphisms $$S_n \times S_m \rightarrow S_{mn}$$ given by mapping a pair $(\sigma,\tau)$ to the permutation of $\{1,...,mn\}$ which separates the set into $n$ blocks of size $m$, applies $\tau$ to each block and finally applies $\sigma$ to the blocks themselves. We denote this groupoid by $\mathbf{\Sigma}$.

\begin{definition}
For a groupoid $\G$ as described in Definition \ref{G}, we define the category $\catname{GSeq}$ of $\G$-\textit{sequences} in $\C$ as the functor category $\text{Fun}(\G^\op,\C)$. In the case of the groupoid $\mathbf{\Sigma}$, we denote this category by $\catname{sSeq}$.
\end{definition}

Unravelling the definitions, a $\G$-sequence $\X$ in the monoidal category $\C$ is given by a sequence of objects $(\X(0),\X(1),\X(2),...)$ such that $\X(n)$ admits a right action of $G_n$ for all non-negative integers $n$. If $\C$ is a concrete category, the elements of $\X(n)$ are often called \textit{of arity $n$}, and $\X(n)$ itself is called the \textit{arity $n$-piece}. A morphism between two $\G$-sequences is given by a sequence of $G_n$-equivariant morphisms in $\C$. We visualize the situation in the following diagram.
\[\begin{tikzcd}
	{\X(0)} & {\X(1)} & {\X(2)} & {\X(3)} & \cdots \\
	{\Y(0)} & {\Y(1)} & {\Y(2)} & {\Y(3)} & \cdots
	\arrow["{f_0}", from=1-1, to=2-1]
	\arrow["{f_1}", from=1-2, to=2-2]
	\arrow["{f_2}", from=1-3, to=2-3]
	\arrow["{f_3}", from=1-4, to=2-4]
\end{tikzcd}\]

\subsection{The composition monoidal product}
We now introduce two important monoidal structures on $\catname{sSeq}$; the first is highly non-symmetric. While the second admits a right adjoint and is thus closed, the first does not (at least a priori). The second structure generalizes to any category $\catname{GSeq}$.

\begin{definition}
The \textit{composition monoidal structure} on $\catname{sSeq}$ is given on objects by $$\left(\X \circ \Y\right)(n) = \coprod_{\substack{k \\n_1+ \ldots +n_k=n}} \X(k) \otimes_{S_k} (\Y(n_1) \otimes \Y(n_2) \otimes \ldots \Y(n_k)) \otimes_{S_{n_1} \times \ldots \times S_{n_k}}\coprod_{S_n}\textbf{1}$$ where the action of $S_k$ on $\Y(n_1) \otimes \Y(n_2) \otimes \ldots \Y(n_k)$ permutes the different factors, and the action of $S_{n_1} \times \ldots \times S_{n_k}$ on $\coprod_{S_n}\textbf{1}$ is given by pulling back the action of $S_n$ that permutes the factors by the homomorphism $$S_{n_1} \times \ldots \times S_{n_k} \rightarrow S_n$$ which constructs a permutation by blocks from the starting ones. Note that $\catname{1}$ is the unit object in the monoidal category $\catname{C}$. On morphisms, it is given by $$(f \circ g)_n = \coprod_{\substack{k \\n_1+\ldots+n_k=n}} f_k \otimes_{S_k} (g_{n_1} \otimes g_{n_2} \otimes \ldots g_{n_k}) \otimes_{S_{n_1} \times \ldots \times S_{n_k}}\coprod_{S_n} \id_{\textbf{1}}.$$
\end{definition}

The associativity and left and right unit constraints for this monoidal structure are straightforward. The monoidal unit is given by the sequence $\mathcal{J} = (\varnothing, \textbf{1}, \varnothing, ...)$ with trivial $S_n$ actions, where $\varnothing$ denotes the initial object in $\catname{C}$.

\begin{definition} \label{definition_operad}
An \textit{operad} is a monoid in the monoidal category $(\catname{sSeq}, \circ, \mathcal{J})$. Explicitly, it consists of a symmetric sequence $\PP$ together with morphisms of symmetric sequences $\mu:\PP \circ \PP \rightarrow \PP$, called the \textit{multiplication}, and $\eta: \mathcal{J} \rightarrow \PP$, called the \textit{unit}, such that the following diagrams commute. 
\[\begin{tikzcd}
	{(\PP \circ \PP) \circ \PP} && {\PP \circ (\PP \circ \PP)} \\
	{\PP \circ \PP} && {\PP \circ \PP} \\
	& \PP
	\arrow["{{{a_{\PP,\PP,\PP}}}}", from=1-1, to=1-3]
	\arrow["{{\mu \circ \id_{\PP}}}"', from=1-1, to=2-1]
	\arrow["{{\id_{\PP} \circ \mu}}", from=1-3, to=2-3]
	\arrow["\mu"', from=2-1, to=3-2]
	\arrow["\mu", from=2-3, to=3-2]
\end{tikzcd}\]
\[\begin{tikzcd}
	{\mathcal{J} \circ \PP} && {\PP \circ \PP} && {\PP \circ \mathcal{J}} && {\PP \circ \PP} \\
	& \PP &&&& \PP
	\arrow["{{\eta \circ \id_{\PP}}}", from=1-1, to=1-3]
	\arrow["{{l_{\PP}}}"', from=1-1, to=2-2]
	\arrow["\mu", from=1-3, to=2-2]
	\arrow["{{\id_{\PP} \circ \eta}}", from=1-5, to=1-7]
	\arrow["{{r_{\PP}}}"', from=1-5, to=2-6]
	\arrow["\mu", from=1-7, to=2-6]
\end{tikzcd}\]
The arrows labeled $a, l$, and $r$ in the diagrams above denote the associator and the unitors in ($\catname{sSeq}, \circ, \mathcal{J}$).
\end{definition}

Viewing elements of $\PP(n)$ as described above, an operad structure on $\PP$ corresponds precisely to a gluing operation on trees. At the level of $n$-ary operations, the map $$\mu_n: \coprod_{\substack{k \\n_1+ \ldots +n_k=n}} \PP(k) \otimes_{S_k} (\PP(n_1) \otimes \PP(n_2) \otimes \ldots \PP(n_k)) \otimes_{S_{n_1} \times \ldots \times S_{n_k}}\coprod_{S_n}\textbf{1} \rightarrow \PP(n)$$ constructs a tree with $n$ incoming arrows from a tuple of trees with $n_1, \ldots, n_k$ incoming arrows, together with a separate $k$-ary tree that glues them. Visually, we have the following situation.

\[\begin{tikzcd}[column sep=small,row sep=scriptsize]
	& {n_1} &&&& {n_2} &&&&&& {n_k} \\
	\bullet & \cdots & \bullet && \bullet & \cdots & \bullet &&&& \bullet & \cdots & \bullet \\
	& \bullet &&&& \bullet &&& \cdots &&& \bullet \\
	& \bullet &&&& \bullet &&&&&& \bullet \\
	\\
	&&&&&& \bullet \\
	&&&&&& \bullet
	\arrow[no head, from=2-1, to=3-2]
	\arrow[no head, from=2-3, to=3-2]
	\arrow[no head, from=2-5, to=3-6]
	\arrow[no head, from=2-7, to=3-6]
	\arrow[no head, from=2-11, to=3-12]
	\arrow[no head, from=2-13, to=3-12]
	\arrow[no head, from=3-2, to=4-2]
	\arrow[no head, from=3-6, to=4-6]
	\arrow[no head, from=3-12, to=4-12]
	\arrow[no head, from=4-2, to=6-7]
	\arrow[no head, from=4-6, to=6-7]
	\arrow[no head, from=4-12, to=6-7]
	\arrow[no head, from=6-7, to=7-7]
\end{tikzcd}\]

\subsection{The matrix monoidal product}
We now turn to the definition of the second important monoidal structure on symmetric sequences, following the presentation of Dwyer and Hess in \cite{Operads_2013_Hess}. First, we introduce a more general construction of a closed monoidal structure on the functor category $\text{Fun}(\catname{C},\catname{Set})$.

\begin{definition}
Let $\C$ be a (locally small) monoidal category and $X,Y \in \text{Ob}(\text{Fun}(\C,\textbf{Set}))$. We define the \textit{Day convolution} of $X$ and $Y$ as the left Kan extension of their external product $$X\Tilde{\times}Y: (c,d) \mapsto X(c) \times Y(d)$$ along the bifunctor given by the internal monoidal law $\otimes$ of $\C$. 
\[\begin{tikzcd}
	{\C \times \C} && \Set \\
	& {\C}
	\arrow[""{name=0, anchor=center, inner sep=0}, "{{X \Tilde{\times} Y}}", from=1-1, to=1-3]
	\arrow["\otimes"', from=1-1, to=2-2]
	\arrow["{{\text{Lan}_\otimes (X\Tilde{\times} Y) =: X \otimes_{\mathrm{Day}}Y }}"', dashed, from=2-2, to=1-3]
	\arrow[shorten <=3pt, Rightarrow, from=0, to=2-2]
\end{tikzcd}\]

The Day convolution, denoted by $\otimes_{\mathrm{Day}}$, assembles into a bifunctor and gives $\text{Fun}(\C,\Set)$ the structure of a closed monoidal category, as shown in Proposition 3.1 of \cite{nlab:day_convolution}. 
\end{definition} 


\begin{remark}
A direct computation of the left Kan extension yields the following formula for the Day convolution of two functors in terms of a coend: $$X \otimes_{\mathrm{Day}}Y(c) = \int^{(c_1,c_2) \in \C \times \C} \C(c_1 \otimes c_2,c) \times X(c_1) \times Y(c_2).$$
\end{remark}

Applying Day convolution to $\catname{GSeq}$ with the monoidal structure $\nu_{\text{mult}}$ on $\G$ provides a closed monoidal law $(-) \square (-)$. Dwyer and Hess call this the \textit{matrix monoidal structure} in \cite{Operads_2013_Hess}. On objects of $\catname{GSeq}$, it is given explicitly by the formula $$\left(\X \square \Y\right)(n) = \coprod_{lm=n} (\X(l) \times \Y(m)) \times_{G_l \times G_m}G_n.$$ Note that the monoidal unit is $\mathcal{J}$, the same as for the composition structure.
\begin{definition}
    A \textit{Day-operad} is a monoid in the monoidal category $(\catname{sSeq}, \square, \mathcal{J})$.
\end{definition}

\section{The interplay between \texorpdfstring{$\square$}{square} and \texorpdfstring{$\circ$}{circ}}
\label{section_main_theorem}

We are interested in the interplay between the composition and matrix monoidal structures. In \cite{Operads_2013_Hess}, Dwyer and Hess prove the existence of a natural transformation 
\begin{equation} \label{natural_sigma} \sigma_{-,-,-,-}: \Big((-) \circ (-)\Big) \square \Big((-) \circ (-)\Big) \Rightarrow \Big((-) \square (-)\Big) \circ \Big((-) \square (-)\Big)(23) \end{equation} between functors with domain $\catname{GSeq}^{\times 4}$ and codomain $\catname{GSeq}$. The functor $(23)$ swaps the second and third components. We will often drop the subscripts when writing $\sigma$ if the context is unambiguous.

\subsection{Symmetric sequences and 2-monoidal categories} We strengthen this result by proving that the two monoidal structures are even more tightly related.

\begin{theorem}
\label{2-monoidal}
The category $\left(\catname{sSeq}, \square, \mathcal{J}, \circ, \mathcal{J}\right)$ is a $2$-monoidal category in the sense of  \cite{Aguiar2010}.
\end{theorem}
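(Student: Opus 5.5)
We must exhibit the data of a 2-monoidal (duoidal) category in the sense of Aguiar--Mahajan, taking $\square$ as the first product (with unit $\mathcal{J}$) and $\circ$ as the second (with unit $\mathcal{J}$). The required structure maps are:
\begin{itemize}
\item the interchange law $\zeta=\sigma:(\X\circ\Y)\square(\mathcal{Z}\circ\mathcal{W})\Rightarrow(\X\square\mathcal{Z})\circ(\Y\square\mathcal{W})$, which is the natural transformation \eqref{natural_sigma} of Dwyer and Hess;
\item three unit maps $\Delta:\mathcal{J}\to\mathcal{J}\circ\mathcal{J}$, $\mu:\mathcal{J}\square\mathcal{J}\to\mathcal{J}$ and $\iota:\mathcal{J}\to\mathcal{J}$.
\end{itemize}
Since both monoidal units equal $\mathcal{J}$, I take $\iota=\id_{\mathcal{J}}$, $\Delta=l_{\mathcal{J}}^{-1}$ (the $\circ$-unitor) and $\mu=\lambda_{\mathcal{J}}$ (the $\square$-unitor). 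Then $(\mathcal{J},\Delta,\iota)$ is a $\circ$-comonoid and $(\mathcal{J},\mu,\iota)$ is a $\square$-monoid, essentially for free: these are the canonical (co)monoid structures on a unit object, and the unitors agree on $\mathcal{J}$ by the usual coherence $l_{\mathbf 1}=r_{\mathbf 1}$.

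First I write $\sigma$ explicitly on elements. An element of $((\X\circ\Y)\square(\mathcal{Z}\circ\mathcal{W}))(n)$ is represented by a pair of the following data, together with a group element of $S_n$ where $n=lm$:
\begin{itemize}
\item $(x;y_1,\dots,y_k;\alpha)$ of arity $l=\sum l_i$;
\item $(z;w_1,\dots,w_j;\beta)$ of arity $m=\sum m_s$.
\end{itemize}
Its image under $\sigma$ is $(x\square z;\ (y_i\square w_s)_{(i,s)};\ \gamma)$. Here the $kj$ inputs are ordered lexicographically, and $\gamma\in S_n$ is the block permutation identifying $\coprod_{(i,s)} l_im_s$ with $lm$, composed with $\psi(\alpha,\beta)$.

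Well-definedness and naturality are taken from Dwyer--Hess. I will reverify them quickly via the equivalence with their categories $\catname{Cospan}/\catname{Comp}$. In that picture, an element of a $\circ$-product is a pair of composable maps of finite sets $n\to k$. The $\square$ product corresponds to cartesian product of finite sets, and $\sigma$ is simply the canonical isomorphism $(l\to k)\times(m\to j)\cong(l\times m\to k\times j)$. This is the key simplification.

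Next I check the axioms, each reduced to the finite-set model where both sides become canonical maps built from products and composites of maps of finite sets.
\begin{itemize}
\item \emph{Associativity of $\zeta$ with respect to $\square$.} The two routes $(A\circ B)\square(C\circ D)\square(E\circ F)\to(A\square C\square E)\circ(B\square D\square F)$ agree because $\times$ of finite sets is strictly associative on lexicographic orderings, and $\psi$ is functorial (associative).
\item \emph{Associativity with respect to $\circ$.} The two routes $(A\circ B\circ C)\square(D\circ E\circ F)\to(A\square D)\circ(B\square E)\circ(C\square F)$ agree because lexicographic reindexing of a two-level tree commutes with grafting.
\item \emph{Unitality.} The squares involving $\Delta,\mu,\iota$ reduce to $\sigma_{\mathcal{J},\X,\mathcal{J},\Y}$ and $\sigma_{\X,\mathcal{J},\Y,\mathcal{J}}$ being compatible with the unitors. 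This holds since tensoring with the arity-one piece $\mathbf 1$ does not change the block permutation.
\end{itemize}

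The main obstacle is the $\circ$-associativity axiom. On both sides one must track the permutations $\gamma$ produced at each stage, and show that the composite block permutations in $S_n$ coincide modulo the coinvariant relations $\otimes_{S_k}$ and $\otimes_{S_{n_1}\times\cdots\times S_{n_k}}$. I will attack it by passing to the category of composable triples $n\to k_2\to k_1$ of finite sets (a "$\catname{Comp}_3$" extending $\catname{Comp}$). Ternary composites are equivalent to coends over this category, and both maps then become the same canonical isomorphism induced by the product of triples. This yields the axiom without explicit permutation bookkeeping, modulo the naturality of the coend description.
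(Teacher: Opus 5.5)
Your proposal follows essentially the paper's route. It uses the same data (the Dwyer--Hess $\sigma$, the unitors of $\mathcal{J}$, and $\iota=\id_{\mathcal{J}}$), transports the problem to multiplicative functors on $\catname{A}^{\op}$, and checks the associativity axioms after lifting to $\catname{3-Cospan(Comp)}$ and $\catname{Cospan(3-Comp)}$. The step you defer, ``naturality of the coend description'', is exactly what Lemmas \ref{lemma_left_kan_1}--\ref{lemma_left_kan_2} and the maps $t^{(1)},\dots,t^{(4)}$ supply; both routes then agree because $\delta\circ\delta=\widetilde\delta$.

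Two small corrections. First, $\sigma$ is a canonical identification only on indexing shapes: on decorations it copies each $y_i$ and $w_s$, so it is not invertible. Second, the $\Delta$-unitality squares involve $\sigma_{\mathcal{J},\mathcal{J},\X,\Y}$ and $\sigma_{\X,\Y,\mathcal{J},\mathcal{J}}$, which your list omits, although the same arity-one argument handles them.
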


\begin{definition}[\cite{Aguiar2010}]
\label{definition_2-monoidal}
    A \textit{2-monoidal category} is a 5-tuple $(\C,\square,I,\circ,J)$ where $(\C,\square,I)$ and $(\C,\circ,J)$ are monoidal categories with units $I$ and $J$ respectively, along with a transformation (called the interchange law) $$\zeta_{A,B,C,D}: (A \circ B) \square (C \circ D) \rightarrow (A \square C) \circ (B \square D)$$ that is natural in $A,B,C$, and $D$, and three morphisms $$\Delta_I: I \rightarrow I \circ I, \quad \mu_J: J \square J \rightarrow J, \quad \iota_J = \epsilon_I: I \rightarrow J,$$ such that the axioms below are satisfied.

    \textbf{Associativity.} The following diagrams commute.
\begin{equation}\label{diagram_2-monoidal_associativity_1}
\begin{tikzcd}
	{\Big((A \circ B)\square (C \circ D)\Big) \square (E \circ F)} & {(A \circ B)\square \Big((C \circ D) \square (E \circ F)\Big)} \\
	{\Big((A \square C)\circ (B \square D)\Big) \square (E \circ F)} & {(A \circ B) \square \Big((C \square E) \circ (D \square F)\Big)} \\
	{\Big((A \square C) \square E\Big) \circ \Big((B \square D) \square F\Big)} & {\Big(A \square (C \square E)\Big) \circ \Big(B \square (D \square F)\Big)}
	\arrow["{\scriptscriptstyle a^\square}", from=1-1, to=1-2]
	\arrow["{\scriptscriptstyle \zeta \square (\id \circ \id)}"', from=1-1, to=2-1]
	\arrow["{\scriptscriptstyle (\id \circ \id) \square \zeta}", from=1-2, to=2-2]
	\arrow["{\scriptscriptstyle \zeta}"', from=2-1, to=3-1]
	\arrow["{\scriptscriptstyle \zeta}", from=2-2, to=3-2]
	\arrow["{\scriptscriptstyle a^\square \circ a^\square}"', from=3-1, to=3-2]
\end{tikzcd}\end{equation}

\begin{equation}\label{diagram_2-monoidal_associativity_2}
\begin{tikzcd}
	{\Big((A \circ B)\circ C\Big) \square \Big((D \circ E) \circ F\Big)} & {\Big(A \circ (B\circ C)\Big) \square \Big(D \circ (E \circ F)\Big)} \\
	{\Big((A \circ B) \square (D \circ E)\Big) \circ (C \square F)} & {(A \square D) \circ \Big((B \circ C) \square (E \circ F)\Big)} \\
	{\Big((A \square D) \circ (B \square E)\Big) \circ (C \square F)} & {(A \square D) \circ \Big((B \square E) \circ (C \square F)\Big)}
	\arrow["{\scriptscriptstyle a^\circ \square a^\circ}", from=1-1, to=1-2]
	\arrow["{\scriptscriptstyle \zeta}"', from=1-1, to=2-1]
	\arrow["{\scriptscriptstyle \zeta}", from=1-2, to=2-2]
	\arrow["{\scriptscriptstyle \zeta \circ (\id \square \id)}"', from=2-1, to=3-1]
	\arrow["{\scriptscriptstyle (\id \square \id) \circ \zeta}", from=2-2, to=3-2]
	\arrow["{\scriptscriptstyle a^\circ}"', from=3-1, to=3-2]
\end{tikzcd}\end{equation}

\textbf{Unitality.} The following diagrams commute.
\begin{equation}\label{diagram_2-monoidal_unitality_1}
\begin{tikzcd}
	{I \square (A \circ B)} & {(I \circ I)\square(A \circ B)} && {(A \circ B) \square I} & {(A \circ B)\square (I \circ I)} \\
	{A \circ B} & {(I \square A) \circ (I \square B)} && {A \circ B} & {(A \square I) \circ (B \square I)}
	\arrow["{\scriptscriptstyle \Delta_I \square (\id \circ \id)}", from=1-1, to=1-2]
	\arrow["{\scriptscriptstyle \zeta}", from=1-2, to=2-2]
	\arrow["{\scriptscriptstyle (\id \circ \id) \square \Delta_I}", from=1-4, to=1-5]
	\arrow["{\scriptscriptstyle \zeta}", from=1-5, to=2-5]
	\arrow["{\scriptscriptstyle l^\square_{A \circ B}}", from=2-1, to=1-1]
	\arrow["{\scriptscriptstyle l^\square_A \circ l^\square_B}"', from=2-1, to=2-2]
	\arrow["{\scriptscriptstyle r^\square_{A \circ B}}", from=2-4, to=1-4]
	\arrow["{\scriptscriptstyle r^\square_A \circ r^\square_B}"', from=2-4, to=2-5]
\end{tikzcd}\end{equation}

\begin{equation}\label{diagram_2-monoidal_unitality_2}
\begin{tikzcd}
	{J \circ (A \square B)} & {(J \square J) \circ (A \square B)} && {(A \square B)\circ J} & {(A \square B) \circ (J \square J)} \\
	{A \square B} & {(J \circ A) \square (J \circ B)} && {A \square B} & { (A \circ J) \square (B \circ J)}
	\arrow["{\scriptscriptstyle \mu_J \circ (\id \square \id)}"', from=1-2, to=1-1]
	\arrow["{\scriptscriptstyle (\id \square \id) \circ \mu_J}"', from=1-5, to=1-4]
	\arrow["{\scriptscriptstyle l^\circ_{A \square B}}", from=2-1, to=1-1]
	\arrow["{\scriptscriptstyle l^\circ_A \square l^\circ_B}"', from=2-1, to=2-2]
	\arrow["{\scriptscriptstyle \zeta}"', from=2-2, to=1-2]
	\arrow["{\scriptscriptstyle r^\circ_{A \square B}}", from=2-4, to=1-4]
	\arrow["{\scriptscriptstyle r^\circ_A \square r^\circ_B}"', from=2-4, to=2-5]
	\arrow["{\scriptscriptstyle \zeta}"', from=2-5, to=1-5]
\end{tikzcd}\end{equation}

\textbf{Compatibility of units.} The units $I$ and $J$ are compatible in the following sense. $$(J, \mu_J, \iota_J) \; \text{is a monoid in} \; (\C, \square, I).$$ $$(I, \Delta_I, \epsilon_I) \; \text{is a comonoid in} \; (\C, \circ, J).$$ The arrows labeled $a$, $l$, and $r$ in the diagrams above denote the associativity and unit constraints of the respective monoidal categories.
\end{definition}

\subsection{Technical lemmas and notation}
Before proving the main result, we explain the construction of the above-mentioned natural transformation $$\sigma_{-,-,-,-}: \Big((-) \circ (-)\Big) \square \Big((-) \circ (-)\Big) \Rightarrow \Big((-) \square (-)\Big) \circ \Big((-) \square (-)\Big)(23) ,$$ by introducing relevant notation and technical lemmas. Some of these appear in \cite{Operads_2013_Hess} and are labeled accordingly.

\begin{definition}
\label{definition_A}
Let $\catname{A}$ be the groupoid in which the objects are maps between finite sets and the morphisms between two objects $f:T \rightarrow S$ and $g:T' \rightarrow S'$ are pairs of bijections $$\left(T \xrightarrow{k} T', S \xrightarrow{l} S'\right)$$ that make the following diagram commute.
\[\begin{tikzcd}
	T & S \\
	{T'} & {S'}
	\arrow["f", from=1-1, to=1-2]
	\arrow["k"', from=1-1, to=2-1]
	\arrow["l", from=1-2, to=2-2]
	\arrow["g"', from=2-1, to=2-2]
\end{tikzcd}\]

\end{definition}

The next lemma comes from \cite{Operads_2013_Hess}, but we present an alternative proof.

\begin{lemma} [\cite{Operads_2013_Hess}]
\label{equivalence}
For any symmetric closed and cocomplete monoidal category $\catname{C}$, the category $\catname{sSeq}  = \text{Fun}(\mathbf{\Sigma}^{\op}, \catname{C})$ is equivalent to the category $[\catname{A}^{\op},\catname{C}]_{\text{mult}}$ of functors $\catname{A}^{\op} \rightarrow \catname{C}$ that are multiplicative, i.e., monoidal with respect to $\coprod$ on $\catname{A}^{\op}$ and $\otimes$ on $\catname{C}$.
\end{lemma}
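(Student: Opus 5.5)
The strategy is to build the equivalence explicitly in both directions and then check the two composites. A multiplicative functor is forced to be a tensor power of its values on connected pieces, so everything reduces to understanding the groupoid $\catname{A}$ through its connected (single-fiber) objects.

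First I study the structure of $\catname{A}$. Every object $f:T\to S$ decomposes canonically as a coproduct over $s\in S$ of the maps $f^{-1}(s)\to\{s\}$. Hence $f$ is isomorphic to $\coprod_{s\in S}(\underline{n_s}\to *)$, where $n_s=|f^{-1}(s)|$ and $\underline{n}=\{1,\dots,n\}$. I then compute the automorphism group of a connected object $\underline{n}\to *$. A pair $(k,l)$ with $l$ the identity of the point and $k$ any bijection of $\underline{n}$ makes the square commute, so this group is exactly $S_n$. The full subgroupoid on the objects $\underline{n}\to *$, $n\geq 0$, is therefore isomorphic to $\mathbf{\Sigma}$. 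Let $i:\mathbf{\Sigma}\to\catname{A}$ denote this inclusion. Note that $\underline{0}\to *$ is a legitimate object, distinct from the empty map $\varnothing\to\varnothing$, which is the monoidal unit for $\coprod$.

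The functor $\Phi:[\catname{A}^{\op},\catname{C}]_{\text{mult}}\to\catname{sSeq}$ is restriction along $i$. For the inverse $\Psi$, given $\X$ I set
$$\Psi(\X)(f:T\to S)=\bigotimes_{s\in S}\X(f^{-1}(s)),$$
which uses the unordered tensor product over the finite set $S$. This is well defined up to canonical isomorphism because $\catname{C}$ is symmetric monoidal, and I extend $\X$ to arbitrary finite sets by $\X(U)=\X(\underline{|U|})\otimes_{S_{|U|}}\coprod_{\mathrm{Bij}(\underline{|U|},U)}\mathbf{1}$, or equivalently by a left Kan extension along $\mathbf{\Sigma}\simeq\mathrm{FinBij}$. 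A morphism $(k,l):f\to g$ permutes the fibers via $l$ and restricts to bijections $f^{-1}(s)\cong g^{-1}(l(s))$ via $k$. Accordingly, $\Psi(\X)(k,l)$ combines the symmetry isomorphism of $\catname{C}$ that reindexes the factors along $l$ with the fiberwise actions. Functoriality follows from the coherence theorem for symmetric monoidal categories. Multiplicativity is immediate: the fibers of $f\sqcup g$ are the disjoint union of the fibers of $f$ and of $g$, which gives $\Psi(\X)(f\sqcup g)\cong\Psi(\X)(f)\otimes\Psi(\X)(g)$, and the empty map goes to the empty tensor product $\mathbf{1}$.

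The composite $\Phi\Psi$ is clearly naturally isomorphic to the identity, since a map with a single fiber gives a single tensor factor. For $\Psi\Phi\cong\id$, I take a multiplicative $F$ and use the canonical decomposition of $f$ as a coproduct of its fibers. The multiplicativity constraints then give $F(f)\cong\bigotimes_s F(f^{-1}(s)\to\{s\})\cong\bigotimes_s F(i(n_s))$.

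The main obstacle is showing that these isomorphisms are natural with respect to all morphisms of $\catname{A}$, including those that nontrivially permute the points of $S$. This is where I need the monoidal structure on $F$ to be symmetric, that is, compatible with the braidings of $\coprod$ and $\otimes$. I would reduce the verification to generators. A morphism of $\catname{A}$ factors as a coproduct of fiberwise automorphisms followed by a block permutation of the fibers. Fiberwise automorphisms are handled by the naturality of the monoidal constraint of $F$. Block permutations are handled by the compatibility of $F$ with the symmetries. Finally, the assignments are functorial in natural transformations in the evident way, so $\Phi$ and $\Psi$ form an equivalence.
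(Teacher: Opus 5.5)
Your argument is correct, but it is organized differently from the paper's.

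The paper specializes to $\C=\Set$, where $\prod_{s\in S}$ is canonically unordered. It proves that restriction to $\mathbf{\Sigma}^{\op}$ is essentially surjective, since restricting $\Phi_{\X}(f)=\prod_{s}\X(|f^{-1}(s)|)$ returns $\X$, and that it is fully faithful. Faithfulness is argued as you would: a monoidal transformation is determined by its components on single-fiber maps. Fullness is dismissed with ``surjectivity follows by the same argument as above''.

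You instead work in a general symmetric monoidal $\C$ via an unordered tensor product and coherence, and you construct an explicit quasi-inverse with natural isomorphisms in both directions. In doing so you supply two things the paper leaves implicit:
\begin{enumerate}
\item a canonical meaning of $\X(f^{-1}(s))$ for arbitrary finite sets; the paper never specifies how $\Phi_{\X}$ acts on morphisms of $\catname{A}$;
\item naturality of $F(f)\cong\bigotimes_{s}F(f^{-1}(s)\to\{s\})$ under morphisms that permute the points of $S$.
\end{enumerate}
The second point is exactly the content hidden in the paper's fullness step. Lifting a map of sequences to a transformation of multiplicative functors means defining its components as tensor products of fiber components and checking naturality under block permutations.

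Your observation that this needs $F$ to be \emph{symmetric} monoidal is genuinely necessary. Take a field $k$ of characteristic $\neq 2$ and the functor $(T\xrightarrow{f}S)\mapsto\det(k[S])$ into $\mathrm{Vect}_k$. It is strong monoidal from $\coprod$ to $\otimes$ but not symmetric: the swap picks up the sign $(-1)^{|S||S'|}$. It restricts to the constant sequence $k$ with trivial actions. Yet the swap automorphism of $\varnothing\to\{1,2\}$ acts by $-1$, so it is not isomorphic to the extension of its restriction. The lemma as literally stated (``monoidal'') is therefore false.

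You should say at the outset that ``multiplicative'' means symmetric strong monoidal and that morphisms are monoidal natural transformations. You should also note that your isomorphism $\Psi\Phi F\cong F$ is monoidal, hence an isomorphism in $[\catname{A}^{\op},\C]_{\text{mult}}$.

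A minor wording point: before reindexing along $l$, the fiberwise pieces of a morphism $(k,l)$ of $\catname{A}$ are isomorphisms $(f^{-1}(s)\to\{s\})\to(g^{-1}(l(s))\to\{l(s)\})$, not automorphisms.
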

\begin{proof}
We will work in the context of concrete cartesian monoidal categories, and more specifically for $\catname{C} = \catname{Set}$. The arguments are analogous in the general case. 

Note that $\mathbf{\Sigma}^\op$ is a subcategory of $\catname{A}^\op$ in the obvious way via the maps $$\{1, \ldots, n\} \rightarrow \{1\}$$ of finite sets.

Define functors $$\catname{sSeq} \rightarrow [\catname{A}^{\op},\catname{C}]_{\text{mult}}: \mathcal{X} \mapsto \Phi_{\mathcal{X}}$$ and $$[\catname{A}^{\op},\catname{C}]_{\text{mult}} \rightarrow \catname{sSeq}: \Phi \mapsto \mathcal{X}_{\Phi}$$ as follows. For all $\Phi \in [\catname{A}^{\op}, \catname{C}]_{\text{mult}}$, the symmetric sequence $\mathcal{X}_{\Phi}$ is given by restricting $\Phi$ to the subcategory $\mathbf{\Sigma}^{\op}$ of $\catname{A}^\op$. For any symmetric sequence $\mathcal{X}$, define $\Phi_{\mathcal{X}}$ by $$\Phi_{\mathcal{X}}\left(T \xrightarrow{f} S \right) = \prod_{s \in S} \mathcal{X}\left(|f^{-1}(s)|\right).$$ 
By the universal property of the categorical product, we have a natural isomorphism $$\Phi_\mathcal{X}(- \coprod -) \xrightarrow{\cong} \Phi_\mathcal{X}(-) \times \Phi_\mathcal{X}(-)$$ making $\Phi_\mathcal{X}$ a multiplicative functor.

We now verify that these functors provide an equivalence of categories. Since the composition $\catname{sSeq} \rightarrow [\catname{A}^\op,\C]_{\text{mult}} \rightarrow \catname{sSeq}$ is the identity, $\Phi \mapsto \mathcal{X}_\Phi$ is surjective and thus essentially surjective. It remains to check bijectivity of the map \begin{equation} \label{full_faithful} [\catname{A}^\op, \C]_\text{mult}(\Phi, \Psi) \rightarrow \catname{sSeq}(\mathcal{X}_\Phi, \mathcal{X}_\Psi). \end{equation} Surjectivity follows by the same argument as above. A morphism between multiplicative functors $\Phi,\Psi: \catname{A}^\op \rightarrow \C$ is a monoidal natural transformation, i.e., a natural transformation $\tau: \Phi \Rightarrow \Psi$ such that
\[\begin{tikzcd}
	{\Phi(f \coprod g)} & {\Phi(f) \times \Phi(g)} \\
	{\Psi(f \coprod g)} & {\Psi(f) \times \Psi(g)}
	\arrow["\cong", from=1-1, to=1-2]
	\arrow["{\tau_{f \coprod g}}"', from=1-1, to=2-1]
	\arrow["{\tau_f \times \tau_g}", from=1-2, to=2-2]
	\arrow["\cong"', from=2-1, to=2-2]
\end{tikzcd}\]
commutes for all $f,g \in \catname{A}^\op$. We need to show that the map (\ref{full_faithful}) is injective. To do this, we show that by restricting a monoidal natural transformation to the subcategory $\mathbf{\Sigma}^\op$ of $\catname{A}^\op$, we do not lose any information. 

Let $f: T \rightarrow S$ be an object of $\catname{A}^\op$, which we may write as $$f = \coprod_{s \in S} \left(f^{-1}(\{s\}) \rightarrow \{s\} \right).$$ By multiplicativity of $\Phi$, we have $$\Phi \left( T \xrightarrow{f} S \right) = \prod_{s \in S} \Phi \left(f^{-1}(\{s\}) \rightarrow \{s\} \right),$$ which shows that $\Phi$ is determined by its values on set maps isomorphic to $\{1, \ldots,n\} \rightarrow \{1\}$. Moreover, the commutative diagram
\[\begin{tikzcd}
	{\Phi\left(\coprod_{s \in S}f^{-1}(\{s\}) \rightarrow \{s\}\right)} & {\prod_{s \in S} \Phi\left(f^{-1}(\{s\}) \rightarrow \{s\}\right)} \\
	{\Psi\left(\coprod_{s \in S}f^{-1}(\{s\}) \rightarrow \{s\}\right)} & {\prod_{s \in S} \Psi\left(f^{-1}(\{s\}) \rightarrow \{s\}\right)}
	\arrow["\cong", from=1-1, to=1-2]
	\arrow["{\tau_{\coprod_{s \in S}f^{-1}(\{s\}) \rightarrow \{s\}}}"', from=1-1, to=2-1]
	\arrow["{\prod_{s \in S} \tau_{f^{-1}(\{s\}) \rightarrow \{s\}}}", from=1-2, to=2-2]
	\arrow["\cong"', from=2-1, to=2-2]
\end{tikzcd}\]
shows that $\tau: \Phi \Rightarrow \Psi$ is also entirely determined by its values on set maps isomorphic to $\{1, \ldots,n\} \rightarrow \{1\}$. This proves that $$[\catname{A}^\op, \C]_\text{mult}(\Phi, \Psi) \rightarrow \catname{sSeq}(\mathcal{X}_\Phi, \mathcal{X}_\Psi)$$ is injective as well, and thus the categories $\catname{sSeq}$ and $[\catname{A}^\op,\C]_\text{mult}$ are equivalent.
\end{proof}

Recall that $(\catname{C},\otimes,\catname{1})$ is a symmetric, closed, and cocomplete monoidal category.

\begin{notation}
Let $\catname{Cospan}$ be the subcategory of $\catname{A}^{\op} \times \catname{A}^{\op}$ whose objects are pairs of maps with coinciding codomain. \\
Let $\catname{Comp}$ be the subcategory of $\catname{A}^{\op} \times \catname{A}^{\op}$ whose objects are composable pairs of maps. Let $\catname{3-Comp}$ be the subcategory of $(\catname{A}^{\op})^ {\times 3}$ whose objects are triples composable maps. 

For functors $\Phi, \Psi \in [\catname{A}^{\op}, \catname{C}]_{\text{mult}}$, we denote by $\Phi \times \Psi$ the external product of the two functors, defined by $$\left(f,f'\right) \mapsto \Phi\left(f\right) \otimes \Psi\left(f'\right),$$ where the tensor product takes place in $\catname{C}$. By abuse of notation, we use the same symbol for its restriction to either $\catname{Cospan}$ or $\catname{Comp}$.
\end{notation}

The next result appears without proof in \cite{Operads_2013_Hess}. We prove the first part; the second follows by a similar argument.
\begin{lemma} [\cite{Operads_2013_Hess}] \label{operations_kan}
Under the equivalence of Lemma \ref{equivalence},
\begin{enumerate}[1)]
    \item the left Kan extension of $\Phi \times \Psi$ along the fibered product functor 
    \begin{align*}
    \catname{Cospan} &\rightarrow \catname{A}^{\op} \\
    \left(T \xrightarrow{f} S, T' \xrightarrow{f'} S\right) &\mapsto \left(T \times_{S}T' \xrightarrow{f\times_{S} f'} S\right)
    \end{align*}
    corresponds to the matrix monoidal product of symmetric sequences,
    \item the left Kan extension of $\Phi \times \Psi$ along the composition functor \begin{align*}
    \catname{Comp} &\rightarrow \catname{A}^{\op} \\ \left(S\xrightarrow{f} R, T \xrightarrow{g} S\right) &\mapsto \left(T \xrightarrow{gf} R\right)
    \end{align*}
    corresponds to the composition product of symmetric sequences, and
    \item the unit for the external product of functors is $\Gamma: \catname{A}^{\op} \rightarrow \C$ specified by $\Gamma(f) = \textbf{1}$ if $f$ is a bijection and $\Gamma(f) = \varnothing$ otherwise.
\end{enumerate}
\end{lemma}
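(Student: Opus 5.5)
The plan is to prove part 1) by computing the left Kan extension pointwise and evaluating it on the corollas $\{1,\ldots,n\} \to \{1\}$. Under the equivalence of Lemma \ref{equivalence}, a multiplicative functor is determined by these values, together with their $S_n$-actions. Write $\pi: \catname{Cospan} \to \catname{A}^{\op}$ for the fibered product functor. The pointwise coend formula gives
$$\mathrm{Lan}_\pi(\Phi \times \Psi)(h) = \int^{(f,f') \in \catname{Cospan}} \catname{A}^{\op}\big(h, \pi(f,f')\big) \otimes \Phi(f) \otimes \Psi(f')$$
for an object $h: T'' \to S''$, where hom-sets are tensored in as copowers $\coprod \textbf{1}$. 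I will take $h = (\underline{n} \to \{1\})$. A morphism in $\catname{A}^{\op}$ from $h$ to $T\times_S T' \to S$ consists of bijections, so nonempty hom-sets force $|S|=1$ and $|T \times T'| = n$.

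The first key step is the collapse of the coend. Every object of $\catname{Cospan}$ whose fibered product has singleton codomain is isomorphic to $(\underline{l} \to \{1\}, \underline{m}\to \{1\})$ with $lm=n$. Since $\catname{Cospan}$ is a groupoid, the coend over the relevant components is a coproduct, over isomorphism classes, of quotients by automorphism groups. Concretely, the relevant full subgroupoid is equivalent to $\coprod_{lm=n} S_l \times S_m$; the automorphisms of the common codomain $\{1\}$ are trivial. The corresponding hom-object is $\coprod_{\mathrm{Bij}(\underline{n}, \underline{l}\times\underline{m})}\textbf{1} \cong \coprod_{S_n}\textbf{1}$, with $S_l\times S_m$ acting through $\psi_{l,m}$ after identifying $\underline{l}\times\underline{m}\cong\underline{lm}$ lexicographically. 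The coend therefore becomes
$$\coprod_{lm=n} (\Phi(\underline{l}\to 1)\otimes \Psi(\underline{m}\to 1)) \otimes_{S_l\times S_m} \coprod_{S_n}\textbf{1} = (\X_\Phi \square \X_\Psi)(n),$$
which is the matrix product formula. The residual right $S_n$-action matches, since on the coend side it comes from precomposition in the hom variable.

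The second step is to check that $\mathrm{Lan}_\pi(\Phi\times\Psi)$ is multiplicative, or at least that its values on a general $h$ agree with $\prod_s$ of its values on fibers. Otherwise the object obtained is not in $[\catname{A}^{\op},\C]_{\text{mult}}$ and the correspondence is not meaningful. For general $h: T''\to S''$, a morphism from $h$ to $\pi(f,f')$ includes a bijection $S\cong S''$. Hence $\catname{Cospan}$ decomposes fiberwise over the points of $S$, and the fibered product is computed fiberwise. Writing $(f,f')$ as a coproduct over $s$ of cospans with codomain $\{s\}$, the coend factors as a tensor over $s\in S''$ of the corolla coends. This uses that $\otimes$ distributes over colimits, since $\C$ is closed, and multiplicativity of $\Phi,\Psi$.

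I expect this step to be the main obstacle. The difficulty is the bookkeeping that shows the quotient by $\mathrm{Aut}(S)$ exactly cancels the choice of the bijection $S''\cong S$, so that one gets $\bigotimes_s$ rather than an extra $\coprod_{S_{|S|}}$ factor. I would handle it by fixing the bijection $S \to S''$ and noting that the $\mathrm{Aut}(S)$-orbits act freely and transitively on those choices. Functoriality and naturality in $\Phi,\Psi$ then follow from the naturality of the coend, and part 2) is analogous using fibers of composites.
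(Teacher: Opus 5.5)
Your corolla computation is the paper's. The paper also evaluates the pointwise coend at $\{1,\ldots,n\}\to\{1\}$, observes that the hom-set is empty unless $|S|=1$ and $|T||T'|=n$ (and is otherwise a copy of $S_n$), and then checks by hand that $\coprod_{lm=n}(\Phi(l)\times\Psi(m))\times_{S_l\times S_m}S_n$ is the universal cowedge. You differ in packaging and in scope. Using the fact that a coend over a groupoid is a coproduct, over isomorphism classes, of automorphism-coinvariants replaces the paper's cowedge chase. It also makes both the quotient by $S_l\times S_m$ (through $\psi_{l,m}$) and the residual $S_n$-action explicit; the paper never discusses equivariance. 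More substantively, your second step does not appear in the paper, which stops at corollas. The equivalence of Lemma \ref{equivalence} lives on $[\A^{\op},\C]_{\text{mult}}$, so agreement on $\mathbf{\Sigma}^{\op}$ identifies the Kan extension with $\Phi_{\mathcal{X}_{\Phi}\square\mathcal{X}_{\Psi}}$ only once the Kan extension is known to be multiplicative. The paper relies on this tacitly, for instance in Corollary \ref{nicecorollary}, where part 2) is applied to $\mathrm{Lan}_\gamma(\Phi\times\Phi')$ and $\mathrm{Lan}_\gamma(\Psi\times\Psi')$. So your route proves the statement in the form in which it is later used.

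Some points to tighten.

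The pointwise formula is $\int^{(f,f')}\A^{\op}(\pi(f,f'),h)\otimes\Phi(f)\otimes\Psi(f')$. Your $\A^{\op}(h,\pi(f,f'))$ has the wrong variance, which is harmless only because $\A^{\op}$ is a groupoid.

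In the step you flag as the obstacle, ``free and transitive'' is correct if you organise the quotient in two stages. First take the coend over morphisms covering $\id_S$. There the groupoid is a product over $s\in S$ of corolla cospans, which gives $\coprod_{\lambda\in\mathrm{Bij}(S,S'')}\bigotimes_{s\in S}$ of your corolla coends at $h^{-1}(\lambda(s))$. Then take coinvariants for the full $\mathrm{Aut}(S)$, acting by transport of structure; this action is indeed free and transitive on $\lambda$. Note that in the per-isomorphism-class decomposition of your first step, only the stabiliser of the fibre sizes $(|f^{-1}(s)|,|f'^{-1}(s)|)$ acts. That stabiliser is free but not transitive on the admissible bijections $S\to S''$. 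For example, fibre sizes $(2,1)$ and $(1,2)$ over an $h$ with two fibres of size $2$ give a trivial group and two orbits. In that framing you would instead have to match orbits with the summands of $\prod_{s''}\coprod_{lm=|h^{-1}(s'')|}$.

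Finally, you do not address part 3), though it is immediate. $\Gamma$ restricts to $\mathcal{J}$ on corollas, and it is multiplicative because a map is a bijection iff all of its fibres are singletons, using $\varnothing\otimes X\cong\varnothing$.
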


\begin{proof}
    We prove the first part of the lemma in the case $\C = \Set$.
    
    Let us denote by $\gamma: \catname{Cospan} \rightarrow \catname{A}^\op$ the fibered product functor. We need to show that the left Kan extension
\[\begin{tikzcd}
	{\catname{Cospan}} && \Set \\
	& {\A^\op}
	\arrow[""{name=0, anchor=center, inner sep=0}, "{\Phi \times \Psi}", from=1-1, to=1-3]
	\arrow["\gamma"', from=1-1, to=2-2]
	\arrow["{\text{Lan}_\gamma (\Phi \times \Psi)}"', dashed, from=2-2, to=1-3]
	\arrow[shorten <=3pt, Rightarrow, from=0, to=2-2]
\end{tikzcd}\]    
    of $\Phi \times \Psi$ along $\gamma$ corresponds to the matrix monoidal product under the equivalence of the categories $[\A^\op,\Set]_\text{mult}$ and $\catname{sSeq}$. 

    In the following computation, we use the coend formula of \cite{nlab:kan_extension} for the left Kan extension to get
    \begin{align*}
        \text{Lan}_\gamma (\Phi \times \Psi) \left( \{1, \ldots, n\} \rightarrow \{1\} \right) &= \int^{(f,f') \in \catname{Cospan}} \coprod_{\A^\op\left( f \times_S f',  \{1, \ldots, n\} \rightarrow \{1\}\right)} \Phi(f) \times \Psi(f') \\
        &= \int^{(f,f') \in \catname{Cospan} \; \text{satisfying} \; (\star)} \coprod_{S_n} \Phi(f) \times \Psi(f').
    \end{align*}
    The coproduct appearing in the first line of the above computation is indexed by \begin{equation} \label{set_A} \A^\op\left( f \times_S f',  \{1, \ldots, n\} \rightarrow \{1\}\right)\end{equation} where $(f,f') \in \catname{Cospan}$, hence the maps are of the form $f: T \rightarrow S$ and $f': T' \rightarrow S$. Note that by the definition of morphisms in the category $\A^\op$, the set (\ref{set_A}) is either empty or can be identified with $S_n$. The latter applies if and only if $|S| = 1$ and $|T||T'| = n$. We denote this condition by $(\star)$. The above-mentioned identification is the reason why some of the coproducts in this proof are indexed by $S_n$.
    
    Recall that our goal is to show that $\text{Lan}_\gamma (\Phi \times \Psi)$ corresponds to $\mathcal{X}_\Phi \square \mathcal{X}_\Psi$ under the equivalence of \ref{equivalence}. Since $$\left(\mathcal{X}_\Phi \square \mathcal{X}_\Psi\right) (n) = \coprod_{lm = n} \left( \Phi(\{1, \ldots, l\} \rightarrow \{1\}) \times \Psi(\{1, \ldots, m\} \rightarrow \{1\}) \right) \times_{S_l \times S_m} S_n$$ it suffices to show that $$\coprod_{lm = n} \left( \Phi(\{1, \ldots, l\} \rightarrow \{1\}) \times \Psi(\{1, \ldots, m\} \rightarrow \{1\}) \right) \times_{S_l \times S_m} S_n$$ satisfies the universal property of $$\int^{(f,f') \in \catname{Cospan} \; \text{satisfying} \; (\star)} \coprod_{S_n} \Phi(f) \times \Psi(f')$$ to conclude.

    The coend above has the universal property of the pushout of the following diagram
\[\begin{tikzcd}
	{\coprod_{\A^\op \left( f \times_S f', \{1, \ldots, n\} \rightarrow \{1\} \right)} \Phi(g) \times \Psi(g')} & {\coprod_{\A^\op \left( g \times_R g', \{1, \ldots, n\} \rightarrow \{1\} \right)} \Phi(g) \times \Psi(g')} \\
	{\coprod_{\A^\op \left( f \times_S f', \{1, \ldots, n\} \rightarrow \{1\} \right)} \Phi(f) \times \Psi(f')}
	\arrow[from=1-1, to=1-2]
	\arrow[from=1-1, to=2-1]
\end{tikzcd}\]
    where $(g,g') \rightarrow (f,f')$ is a morphism in $\catname{Cospan}$. If $(f,f')$ doesn't satisfy $(\star)$, the diagram is empty. On the other hand if $(f,f')$ satisfies $(\star)$, then $(g,g')$ does as well because both pairs are connected via a morphism in $\A^\op \times \A^\op$ and thus we may write $l_f=|\text{dom} f|$ and $m_{f'}=|\text{dom} f'|$, as well as $l_g=|\text{dom} g|$ and $m_{g'}=|\text{dom} g'|$, and observe that the following diagram
\[\begin{tikzcd}[column sep=small]
	{\coprod_{S_n} \Phi(l_g) \times \Psi(m_{g'})} & {\coprod_{S_n} \Phi(l_g) \times \Psi(m_{g'})} \\
	{\coprod_{S_n} \Phi(l_f) \times \Psi(m_{f'})} & {\coprod_{lm = n} \left( \Phi(l) \times \Psi(m) \right) \times_{S_l \times S_m} S_n}
	\arrow[from=1-1, to=1-2]
	\arrow[from=1-1, to=2-1]
	\arrow[from=1-2, to=2-2]
	\arrow[from=2-1, to=2-2]
\end{tikzcd}\]
    commutes since $l_f = l_g$ and $m_{f'} = m_{g'}$, where $\Phi(l)$ denotes $\Phi(\{1, \ldots, l\} \rightarrow \{1\})$. 
    
    It remains to show that $$\coprod_{lm = n} \left( \Phi(\{1, \ldots, l\} \rightarrow \{1\}) \times \Psi(\{1, \ldots, m\} \rightarrow \{1\}) \right) \times_{S_l \times S_m} S_n$$ is indeed the universal cowedge. For this, given a set $Z$ and a family of commutative diagrams of the form
\[\begin{tikzcd}[column sep=small]
	{\coprod_{S_n} \Phi(l_g) \times \Psi(m_{g'})} & {\coprod_{S_n} \Phi(l_g) \times \Psi(m_{g'})} \\
	{\coprod_{S_n} \Phi(l_f) \times \Psi(m_{f'})} & Z
	\arrow[from=1-1, to=1-2]
	\arrow[from=1-1, to=2-1]
	\arrow[from=1-2, to=2-2]
	\arrow[from=2-1, to=2-2]
\end{tikzcd}\]
    where $(g,g') \rightarrow (f,f')$ is a morphism in $\catname{Cospan}$ satisfying $(\star)$, we observe that there are unique morphisms $$\left( \Phi(l) \times \Psi(m) \right) \times_{S_l \times S_m} S_n \rightarrow Z$$ making the diagrams
\[\begin{tikzcd}[column sep=small]
	{\coprod_{S_n} \Phi(l_g) \times \Psi(m_{g'})} & {\coprod_{S_n} \Phi(l_g) \times \Psi(m_{g'})} \\
	{\coprod_{S_n} \Phi(l_f) \times \Psi(m_{f'})} & {\left( \Phi(l) \times \Psi(m) \right) \times_{S_l \times S_m} S_n} \\
	&& Z
	\arrow[from=1-1, to=1-2]
	\arrow[from=1-1, to=2-1]
	\arrow[from=1-2, to=2-2]
	\arrow[curve={height=-30pt}, from=1-2, to=3-3]
	\arrow[from=2-1, to=2-2]
	\arrow[curve={height=30pt}, from=2-1, to=3-3]
	\arrow["{\exists !}"', dashed, from=2-2, to=3-3]
\end{tikzcd}\]
    commute. By taking all morphisms $(g,g') \rightarrow (f,f')$ in $\catname{Cospan}$ satisfying $(\star)$ into account, we get a commutative diagram of the form
\[\begin{tikzcd}[column sep=small]
	{\coprod_{S_n} \Phi(l_g) \times \Psi(m_{g'})} & {\coprod_{S_n} \Phi(l_g) \times \Psi(m_{g'})} \\
	{\coprod_{S_n} \Phi(l_f) \times \Psi(m_{f'})} & {\coprod_{lm = n} \left( \Phi(l) \times \Psi(m) \right) \times_{S_l \times S_m} S_n} \\
	&& Z
	\arrow[from=1-1, to=1-2]
	\arrow[from=1-1, to=2-1]
	\arrow[from=1-2, to=2-2]
	\arrow[curve={height=-30pt}, from=1-2, to=3-3]
	\arrow[from=2-1, to=2-2]
	\arrow[curve={height=30pt}, from=2-1, to=3-3]
	\arrow["{\exists !}"', dashed, from=2-2, to=3-3]
\end{tikzcd}\]
    by the universal property of the coproduct. 
    
    This proves the first part. The second part follows by a similar argument, and the third is a direct verification. 
\end{proof}

\begin{notation} \label{notation}
The following categories will be important in the construction of $\sigma$. Their objects can be visualized using diagrams.

\begin{itemize}

\item Let $\catname{Cospan(Comp)}$ denote the subcategory of $\catname{Comp} \times \catname{Comp}$ the objects of which are pairs of pairs $(f,g), \left(f',g'\right)$ such that $\left(f,f'\right) \in \catname{Cospan}$.
\[\begin{tikzcd}
	T \\
	S \\
	R & {S'} & {T'}
	\arrow["g"', from=1-1, to=2-1]
	\arrow["f"', from=2-1, to=3-1]
	\arrow["{f'}", from=3-2, to=3-1]
	\arrow["{g'}", from=3-3, to=3-2]
\end{tikzcd}\]

\item Let $\catname{Comp(Cospan)}$ denote the subcategory of $\catname{Cospan} \times \catname{Cospan}$ the objects of which are pairs of pairs $\Big(\left(f,f'\right),\left(g,g'\right)\Big)$ such that $$\text{Cod}(g) = \text{Cod}\left(g'\right) = \text{Dom}(f) \times_{\text{Cod}(f)} \text{Dom}\left(f'\right).$$
\begin{equation}
\label{diagram_diagonal}
\begin{tikzcd}
	& T & {T \times_{S \times_{R}S'}T'} \\
	S & {S \times_{R}S'} & {T'} \\
	R & {S'}
	\arrow["g"', from=1-2, to=2-2]
	\arrow[dashed, from=1-3, to=1-2]
	\arrow[color={rgb,255:red,255;green,0;blue,0}, dotted, from=1-3, to=2-2]
	\arrow[dashed, from=1-3, to=2-3]
	\arrow["f"', from=2-1, to=3-1]
	\arrow[dashed, from=2-2, to=2-1]
	\arrow[color={rgb,255:red,255;green,0;blue,0}, dotted, from=2-2, to=3-1]
	\arrow[dashed, from=2-2, to=3-2]
	\arrow["{g'}", from=2-3, to=2-2]
	\arrow["{f'}", from=3-2, to=3-1]
\end{tikzcd}\end{equation}

\end{itemize}
This terminology is coherent because there is a natural way of "composing" the two pairs by composing the two diagonal maps in the pullbacks.

The following categories will be important when applying $\sigma$ to a composition of six symmetric sequences by grouping four of them into two pairs. We provide visualizations of their objects to aid intuition.

\begin{itemize}

\item Let $\catname{Cospan(3-Comp)}$ denote the subcategory of $(\catname{3-Comp})^{\times 2}$ the objects of which are pairs $$\Big((f,g, h),\left(f',g',h'\right)\Big)$$ such that $\left(f,f'\right) \in \catname{Cospan}$.
\[\begin{tikzcd}
	Q \\
	T \\
	S \\
	R & {S'} & {T'} & {Q'}
	\arrow["h"', from=1-1, to=2-1]
	\arrow["g"', from=2-1, to=3-1]
	\arrow["f"', from=3-1, to=4-1]
	\arrow["{f'}"', from=4-2, to=4-1]
	\arrow["{g'}"', from=4-3, to=4-2]
	\arrow["{h'}"', from=4-4, to=4-3]
\end{tikzcd}\]

\item Let $\catname{Comp\Big(Cospan, Cospan(Comp)\Big)}$ denote the subcategory of the product category $\catname{Cospan} \times \catname{Cospan(Comp)}$ the objects of which are of the form $$\Bigg(\left(f,f'\right),\Big((g,h),\left(g',h'\right)\Big)\Bigg)$$ such that $\Big(\left(f,f'\right),\left(gh,g'h'\right) \Big) \in \catname{Comp(Cospan)}$. 
\[\begin{tikzcd}
	& Q && {Q \times_{S \times_{R}S'}Q'} \\
	& T \\
	S & {S \times_{R}S'} & {T'} & {Q'} \\
	R & {S'}
	\arrow["h"', from=1-2, to=2-2]
	\arrow[dashed, from=1-4, to=1-2]
	\arrow[color=red, dotted, from=1-4, to=3-2]
	\arrow[dashed, from=1-4, to=3-4]
	\arrow["g"', from=2-2, to=3-2]
	\arrow["f"', from=3-1, to=4-1]
	\arrow[dashed, from=3-2, to=3-1]
	\arrow[color=red, dotted, from=3-2, to=4-1]
	\arrow[dashed, from=3-2, to=4-2]
	\arrow["{{g'}}", from=3-3, to=3-2]
	\arrow["{{h'}}", from=3-4, to=3-3]
	\arrow["{{f'}}", from=4-2, to=4-1]
\end{tikzcd}\]

\item Let $\catname{3-Comp(Cospan)}$ denote the subcategory of $\catname{Cospan}^{\times 3}$ the objects of which are of the form $$\Big(\left(f,f'\right),\left(g,g'\right),\left(h,h'\right)\Big)$$ with $\Big(\left(f,f'\right),\left(g,g'\right)\Big) \in \catname{Comp(Cospan)}$ and $\Big(\left(g,g'\right),\left(h,h'\right)\Big) \in \catname{Comp(Cospan)}$.
\[\begin{tikzcd}
	&& Q & {Q \times_{T \times_{S \times_{R}S'}T'} Q'} \\
	& T & {T \times_{S \times_{R}S'}T'} & {Q'} \\
	S & {S \times_{R}S'} & {T'} \\
	R & {S'}
	\arrow["h"', from=1-3, to=2-3]
	\arrow[dashed, from=1-4, to=1-3]
	\arrow[color=red, dotted, from=1-4, to=2-3]
	\arrow[dashed, from=1-4, to=2-4]
	\arrow["g"', from=2-2, to=3-2]
	\arrow[dashed, from=2-3, to=2-2]
	\arrow[color=red, dotted, from=2-3, to=3-2]
	\arrow[dashed, from=2-3, to=3-3]
	\arrow["{h'}", from=2-4, to=2-3]
	\arrow["f"', from=3-1, to=4-1]
	\arrow[dashed, from=3-2, to=3-1]
	\arrow[color=red, dotted, from=3-2, to=4-1]
	\arrow[dashed, from=3-2, to=4-2]
	\arrow["{g'}", from=3-3, to=3-2]
	\arrow["{f'}", from=4-2, to=4-1]
\end{tikzcd}\]
The red arrows show the natural way of "composing" these $3$ pairs, motivating the name of the category.
\item Let $\catname{Comp\Big(Cospan(Comp),Cospan\Big)}$ denote the subcategory of $$\catname{Cospan(Comp)} \times \catname{Cospan},$$ which is defined analogously to the category $$\catname{Comp\Big(Cospan, Cospan(Comp)\Big)},$$ but with the roles of the two pairs being reversed. 

\item Let $\catname{3-Cospan(Comp)}$ denote the subcategory of $\catname{Comp}^{\times 3}$ the objects of which are triplets of pairs $$\Big(\left(f,g\right),\left(f',g'\right),\left(f'',g''\right)\Big)$$ such that $\left(f,f'\right),\left(f',f''\right) \in \catname{Cospan}$. 
\[\begin{tikzcd}
	&& T \\
	&& S \\
	&& R \\
	& {S'} && {S''} \\
	{T'} &&&& {T''}
	\arrow["g", from=1-3, to=2-3]
	\arrow["f", from=2-3, to=3-3]
	\arrow["{f'}", from=4-2, to=3-3]
	\arrow["{f''}"', from=4-4, to=3-3]
	\arrow["{g'}", from=5-1, to=4-2]
	\arrow["{g''}"', from=5-5, to=4-4]
\end{tikzcd}\]

\item Let $\catname{Cospan\Big(Comp, Comp(Cospan)\Big)}$ denote the subcategory of the product category $\catname{Comp} \times \catname{Comp(Cospan)}$ the objects of which are of the form $$\Bigg(\left(f,g\right),\Big(\left(f',f''\right),\left(g',g''\right)\Big)\Bigg)$$ such that $\left(f,f'\right),\left(f',f''\right) \in \catname{Cospan}$. 
\[\begin{tikzcd}
	T \\
	S \\
	R & {S'} \\
	{S''} & {S' \times_{R}S''} & {Q'} \\
	& {Q''} & {Q' \times_{S' \times_{R}S''} Q''}
	\arrow["g", from=1-1, to=2-1]
	\arrow["f", from=2-1, to=3-1]
	\arrow["{f'}"', from=3-2, to=3-1]
	\arrow["{f''}", from=4-1, to=3-1]
	\arrow[color=red, dotted, from=4-2, to=3-1]
	\arrow[dashed, from=4-2, to=3-2]
	\arrow[dashed, from=4-2, to=4-1]
	\arrow["{g'}", from=4-3, to=4-2]
	\arrow["{g''}"', from=5-2, to=4-2]
	\arrow[color=red, dotted, from=5-3, to=4-2]
	\arrow[dashed, from=5-3, to=4-3]
	\arrow[dashed, from=5-3, to=5-2]
\end{tikzcd}\]

\item Let $\catname{Comp(3-Cospan)}$ denote the subcategory of $(\catname{A}^{\op})^{\times 6}$ the objects of which are pairs of triplets $$\Big(\left(f,f',f''\right),\left(g,g',g''\right)\Big)$$ such that $\left(f,f'\right),\left(f',f''\right),\left(g,g'\right)$ and $\left(g',g''\right)$ are objects of $\catname{Cospan}$ and
$$\text{Cod}(g) = \text{Cod}\left(g'\right) = \text{Cod}\left(g''\right) = \text{Dom}(f) \times_{\text{Cod}(f)}\text{Dom}\left(f'\right) \times_{\text{Cod}(f)} \text{Dom}\left(f''\right).$$

\[\begin{tikzcd}
	S & R & {S''} \\
	& {S'} \\
	T & {S \times_{R} S' \times_{R} S''} & {T''} \\
	& {T'} \\
	& {T \times_{S \times_{R} S' \times_{R} S''} T' \times_{S \times_{R} S' \times_{R} S''} T''}
	\arrow["f"', from=1-1, to=1-2]
	\arrow["{{f''}}", from=1-3, to=1-2]
	\arrow["{f'}", from=2-2, to=1-2]
	\arrow["g", from=3-1, to=3-2]
	\arrow[dashed, from=3-2, to=1-1]
	\arrow[color=red, curve={height=24pt}, dotted, from=3-2, to=1-2]
	\arrow[dashed, from=3-2, to=1-3]
	\arrow[dashed, from=3-2, to=2-2]
	\arrow["{{g''}}"', from=3-3, to=3-2]
	\arrow["{{g'}}"', from=4-2, to=3-2]
	\arrow[dashed, from=5-2, to=3-1]
	\arrow[color=red, curve={height=24pt}, dotted, from=5-2, to=3-2]
	\arrow[dashed, from=5-2, to=3-3]
	\arrow[dashed, from=5-2, to=4-2]
\end{tikzcd}\]

\item Let $\catname{Cospan\Big(Comp(Cospan), Comp\Big)}$ denote the subcategory of $$\catname{Comp(Cospan)} \times \catname{Comp}$$ which is analogous to $$\catname{Cospan\Big(Comp, Comp(Cospan)\Big)},$$ but with the roles of the two pairs being reversed. 
\end{itemize}
\end{notation}

Observe that the categories $\catname{Cospan(Comp)}$ and $\catname{Comp(Cospan)}$ are linked by a functor $$\omega: \catname{Cospan(Comp)} \rightarrow \catname{Comp(Cospan)}$$ defined by $$\omega\left(\left(S \xrightarrow{f}R, T \xrightarrow{g}S\right),\left(S' \xrightarrow{f'}R, T' \xrightarrow{g'}S'\right)\right) = \Big(\left(f,f'\right),\left(g \times_{R}S', S \times_{R}g'\right)\Big).$$ 
The next result appears without proof in \cite{Operads_2013_Hess}. We prove the first part; the second follows by a similar argument.

\begin{corollary} [\cite{Operads_2013_Hess}] \label{nicecorollary}
Let $\Phi, \Phi ', \Psi, \Psi ' \in [\catname{A}^{\op}, \catname{C}]_{\text{mult}}$. Under the equivalence of Lemma \ref{equivalence}, we have that
\begin{enumerate}[1)]
    \item the left Kan extension of $$\catname{Comp(Cospan)} \xrightarrow{\Phi \times \Phi ' \times \Psi \times \Psi '} \catname{C}$$ along the functor $\catname{Comp(Cospan)} \xrightarrow{\phi} \catname{A}^{\op}$ that composes the diagonal maps as shown in Diagram (\ref{diagram_diagonal}) of Notation $\ref{notation}$ corresponds to $$(\mathcal{X}_{\Phi} \square \mathcal{X}_{\Phi'}) \circ (\mathcal{X}_{\Psi} \square \mathcal{X}_{\Psi'}),$$
    \item the left Kan extension of $$\catname{Cospan(Comp)} \xrightarrow{\Phi \times \Psi  \times \Phi ' \times \Psi '} \catname{C}$$ along the functor $\catname{Cospan(Comp)} \xrightarrow{\phi \omega} \catname{A}^{\op}$ corresponds to $$(\mathcal{X}_{\Phi} \circ \mathcal{X}_{\Psi}) \square (\mathcal{X}_{\Phi'} \circ \mathcal{X}_{\Psi'}).$$
\end{enumerate}
\end{corollary}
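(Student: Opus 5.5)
The plan is to derive part 1) from Lemma \ref{operations_kan} by iterating Kan extensions, using the fact that left Kan extensions compose: $\text{Lan}_{q}\text{Lan}_{p} \cong \text{Lan}_{qp}$. We factor the diagonal-composition functor $\phi: \catname{Comp(Cospan)} \to \catname{A}^{\op}$ through $\catname{Comp}$. First apply the fibered product functor $\gamma$ to each of the two cospans:
$$\gamma \times \gamma: \catname{Comp(Cospan)} \to \catname{Comp}, \qquad \Big((f,f'),(g,g')\Big) \mapsto \Big(f \times_R f', \; g \times_{S\times_R S'} g'\Big).$$
This is well defined precisely because the codomain condition in the definition of $\catname{Comp(Cospan)}$ makes the two resulting maps composable. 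Then apply the composition functor $c: \catname{Comp} \to \catname{A}^{\op}$. By construction, $c \circ (\gamma\times\gamma) = \phi$, since both compose the two red diagonal maps in Diagram (\ref{diagram_diagonal}).

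The key steps, in order, are as follows.

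(i) Observe that $\catname{Comp(Cospan)}$ is the pullback of $\catname{Cospan} \times \catname{Cospan} \xrightarrow{\gamma\times\gamma} \catname{A}^{\op}\times\catname{A}^{\op}$ along the inclusion $\catname{Comp} \hookrightarrow \catname{A}^{\op}\times\catname{A}^{\op}$.

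(ii) Show that
$$\text{Lan}_{\gamma\times\gamma}(\Phi\times\Phi'\times\Psi\times\Psi') \cong \text{Lan}_\gamma(\Phi\times\Phi') \times \text{Lan}_\gamma(\Psi\times\Psi')$$
as functors on $\catname{Comp}$. Externally, on $\catname{A}^{\op}\times\catname{A}^{\op}$, this holds because coends commute with $\otimes$ in each variable: $\C$ is closed, so $\otimes$ preserves colimits in each variable. The real content is that the restriction to the non-full subcategory $\catname{Comp}$ commutes with the Kan extension. Concretely, for a composable pair $(u,v)$ one must check that the coend over $\catname{Comp(Cospan)}$ agrees with the product of the two separate coends over $\catname{Cospan}$. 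I would verify this with the same cowedge argument as in Lemma \ref{operations_kan}, reducing via multiplicativity to the case $\text{Cod}(u)$ a singleton. There, morphisms in $\catname{A}^{\op}$ into $(u,v)$ from pairs of fibered products are just pairs of bijections compatible on the middle set, and these factor uniquely through $\catname{Comp(Cospan)}$. This compatibility is the step I expect to be the main obstacle. If it fails on the nose, the fallback is to compute the coend for $\phi$ directly at $\{1,\dots,n\}\to\{1\}$, as in Lemma \ref{operations_kan}.

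(iii) By Lemma \ref{operations_kan}(1), the two factors correspond to $\mathcal{X}_\Phi\square\mathcal{X}_{\Phi'}$ and $\mathcal{X}_\Psi\square\mathcal{X}_{\Psi'}$. These Kan extensions are again multiplicative, since $\gamma$ sends coproducts to coproducts and Kan extension preserves this. By Lemma \ref{operations_kan}(2), their $\text{Lan}_c$ then corresponds to the composition product.

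(iv) Conclude, via the equivalence of Lemma \ref{equivalence}, that
$$\text{Lan}_\phi(\Phi\times\Phi'\times\Psi\times\Psi') \cong \text{Lan}_c \text{Lan}_{\gamma\times\gamma}(\cdots)$$
corresponds to $(\mathcal{X}_{\Phi} \square \mathcal{X}_{\Phi'}) \circ (\mathcal{X}_{\Psi} \square \mathcal{X}_{\Psi'})$.

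Part 2) goes the same way with the roles reversed. Factor $\phi\omega$ through $\catname{Cospan}$ by first composing each pair, $(f,g)\mapsto fg$ and $(f',g')\mapsto f'g'$, and then taking the fibered product over $R$. Lemma \ref{operations_kan}(2) is applied first and part (1) second. One only needs to check that $\gamma\circ(c\times c) = \phi\omega$, which is the pasting of pullbacks: $T\times_R T' \cong T\times_S (S\times_R S')\times_{S'} T'$.
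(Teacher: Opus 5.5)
Your proposal is correct and follows essentially the same route as the paper: factor $\phi$ as $\gamma\times\gamma$ followed by the composition functor, use $\text{Lan}_\phi\cong\text{Lan}_c\,\text{Lan}_{\gamma\times\gamma}$, split $\text{Lan}_{\gamma\times\gamma}$ as $\text{Lan}_\gamma(\Phi\times\Phi')\times\text{Lan}_\gamma(\Psi\times\Psi')$, and invoke Lemma~\ref{operations_kan}, with part 2) handled analogously via $\phi\omega\cong\gamma\circ(c\times c)$. The paper simply asserts your step (ii) ``by the properties of left Kan extension''. Your concern there is legitimate but resolves as you suggest: all the categories involved are groupoids, so a mismatch on the middle set can always be transported along a bijection. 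Hence the comma categories over $(u,v)$ computed in $\catname{Comp}$ and in $\catname{A}^{\op}\times\catname{A}^{\op}$ are equivalent, and no fallback is needed.
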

\begin{proof}
    Let us first understand the functor $\phi: \catname{Comp(Cospan)} \rightarrow \A^\op$ which composes the diagonal maps as shown in Diagram (\ref{diagram_diagonal}) of Notation \ref{notation}. This functor can in fact be seen as the composition of two familiar functors. Given $$\left(\left(S \xrightarrow{f} R, S' \xrightarrow{f'} R\right),\left(T \xrightarrow{g} S \times_R S', T' \xrightarrow{g'} S \times_R S'\right)\right)$$ in $\catname{Comp(Cospan)}$, we apply the fiber product functor $\gamma$ to each pair to get the maps $$S \times_R S' \rightarrow R \quad \text{and} \quad T \times_{S \times_R S'} T' \rightarrow S \times_R S'$$ and then we compose those maps using the composition functor of Lemma \ref{operations_kan}, which we denote by $\epsilon$.

    The following diagram reflects the situation.
\[\begin{tikzcd}
	{\catname{Comp(Cospan)}} && \C \\
	\\
	{\catname{Comp}} & {\A^\op}
	\arrow[""{name=0, anchor=center, inner sep=0}, "{\Phi \times \Phi' \times \Psi \times \Psi'}", from=1-1, to=1-3]
	\arrow["{\gamma \times \gamma}"', from=1-1, to=3-1]
	\arrow["\phi"', from=1-1, to=3-2]
	\arrow["\epsilon"', from=3-1, to=3-2]
	\arrow["{\text{Lan}_\phi \left(\Phi \times \Phi' \times \Psi \times \Psi'\right)}"', dashed, from=3-2, to=1-3]
	\arrow[shorten <=7pt, Rightarrow, from=0, to=3-2]
\end{tikzcd}\]
    By the properties of left Kan extension we have 
    \begin{align*}
        \text{Lan}_\phi \left(\Phi \times \Phi' \times \Psi \times \Psi'\right) &\cong \text{Lan}_\epsilon \left( \text{Lan}_{\gamma \times \gamma} \left(\Phi \times \Phi' \times \Psi \times \Psi' \right)\right) \\ &\cong \text{Lan}_\epsilon \left( \text{Lan}_\gamma \left(\Phi \times \Phi'\right) \times \text{Lan}_\gamma \left(\Psi \times \Psi'\right) \right),
    \end{align*}
    which by Lemma \ref{operations_kan} corresponds to $$(\mathcal{X}_{\Phi} \square \mathcal{X}_{\Phi'}) \circ (\mathcal{X}_{\Psi} \square \mathcal{X}_{\Psi'})$$ via the equivalence of Lemma \ref{equivalence}. This concludes the proof of the first part. The second part of the corollary can be proved in an analogous way.
\end{proof}

By the universal property of the left Kan extension, to construct the natural transformation $\sigma$ in (\ref{natural_sigma}) it suffices to construct a natural transformation 
$$t:(-) \times (-) \times (-) \times (-) \Rightarrow \Big((-) \times (-) \times (-) \times (-)\Big)(23)\omega.$$
The situation is summed up in the following (non-commutative) diagram.
\[\begin{tikzcd}[row sep = large]
	{\catname{Cospan(Comp)}} && {\catname{C}} \\
	\\
	{\catname{Comp(Cospan)}} \\
	\\
	{\catname{A}^{\op}}
	\arrow[""{name=0, anchor=center, inner sep=0}, "{{{\scriptscriptstyle \Phi \times \Psi \times \Phi' \times \Psi'}}}", from=1-1, to=1-3]
	\arrow["{{\scriptscriptstyle \omega}}"', from=1-1, to=3-1]
	\arrow["{{{\scriptscriptstyle \Phi \times \Phi' \times \Psi \times \Psi'}}}"{description, pos=0.4}, from=3-1, to=1-3]
	\arrow["\scriptscriptstyle \varphi"', from=3-1, to=5-1]
	\arrow[""{name=2, anchor=center, inner sep=0}, "{{{\scriptscriptstyle \text{Lan}_{\varphi}\left(\Phi \times \Phi' \times \Psi \times \Psi'\right)}}}"'{pos=0.8}, curve={height=30pt}, from=5-1, to=1-3]
	\arrow["{{{\scriptscriptstyle t_{\Phi, \Psi, \Phi',\Psi'}}}}"{description}, curve={height=12pt}, shorten <=13pt, Rightarrow, from=0, to=3-1]
    \arrow[""{name=1, anchor=center, inner sep=0}, "{{{\scriptscriptstyle \text{Lan}_{\phi \omega} \left(\Phi \times \Psi \times \Phi' \times \Psi'\right)}}}"{description, pos=0.3}, curve={height=-6pt}, from=5-1, to=1-3]
	\arrow["{{\scriptscriptstyle \sigma}}", shorten <=7pt, shorten >=7pt, Rightarrow, from=1, to=2]
\end{tikzcd}\]
To prove that the coherence diagrams of 2-monoidal categories commute, we will also need to understand natural transformations of the following types
\begin{align*}
    \sigma_{A \square C, B \square D, E, F} : \Big((A \square C) \circ (B \square D)\Big) \square (E \circ F) &\Rightarrow \Big((A \square C) \square E\Big) \circ \Big((B \square D) \square F\Big) \\ 
    \sigma_{A, B, C \square E, D \square F} : (A \circ B) \square \Big((C \square E) \circ (D \square F)\Big) &\Rightarrow \Big(A \square (C \square E)\Big) \circ \Big(B \square (D \square F)\Big) \\
    \sigma_{A \circ B, C, D \circ E, F} : \Big((A \circ B) \circ C\Big) \square \Big((D \circ E) \circ F\Big) &\Rightarrow \Big((A \circ B) \square (D \circ E)\Big) \circ (C \square F) \\
    \sigma_{A, B \circ C, D, E \circ F} : \Big(A \circ (B \circ C)\Big) \square \Big(D \circ (E \circ F)\Big) &\Rightarrow (A \square D) \circ \Big((B \circ C) \square (E \circ F)\Big)
\end{align*}
in which we group four of the six symmetric sequences in pairs. For this, we will study the natural transformations $t$ that induce these specific $\sigma$. These correspond to the following
\begin{align*}
    t_{\Phi \square \Phi', \Psi \square \Psi', \Phi'', \Psi''}: (\Phi \square \Phi') \times (\Psi \square \Psi') \times \Phi'' \times \Psi'' &\Rightarrow \Big((\Phi \square \Phi') \times \Phi'' \times (\Psi \square \Psi') \times \Psi''\Big)\omega \\
    t_{\Phi, \Psi,\Phi' \square \Phi'', \Psi' \square \Psi''}: \Phi \times \Psi \times (\Phi' \square \Phi'') \times  (\Psi' \square \Psi'') &\Rightarrow \Big(\Phi \times (\Phi' \square \Phi'') \times \Psi \times  (\Psi' \square \Psi'')\Big)\omega \\
    t_{\Phi \circ \Psi, \Phi', \Psi' \circ \Phi'', \Psi''}: (\Phi \circ \Psi) \times \Phi' \times  (\Psi' \circ \Phi'') \times \Psi'' &\Rightarrow \Big((\Phi \circ \Psi) \times  (\Psi' \circ \Phi'') \times \Phi' \times \Psi''\Big)\omega \\
    t_{\Phi,  \Psi \circ \Phi', \Psi', \Phi'' \circ \Psi''}: \Phi \times (\Psi \circ \Phi') \times \Psi' \times (\Phi'' \circ \Psi'') &\Rightarrow \Big(\Phi \times \Psi' \times (\Psi \circ \Phi') \times (\Phi'' \circ \Psi'')\Big)\omega 
\end{align*}
where, by abuse of notation, $\Phi \circ \Psi$ denotes the Kan extension from part 2) of Lemma \ref{operations_kan} and $\Phi \square \Psi$ denotes the one from part 1).

We want to prove the commutativity of the diagrams in Definition \ref{definition_2-monoidal} by passing to the category of multiplicative functors and establishing it pointwise for any choice of objects in $\A^{\op}$. However, the four transformations above cannot be directly inserted into these diagrams, since their domain is a product of four multiplicative functors, whereas the other parts of the diagrams involve six. We therefore need to freely interchange between the four-functor and six-functor cases. This is achieved via natural transformations corresponding to the four types of $t$ above, as explained in their construction. We denote these by $t^{(1)}, t^{(2)}, t^{(3)}$, and $t^{(4)}$ in the same order and their corresponding induced $\sigma$ by $\sigma^{(1)}, \sigma^{(2)}, \sigma^{(3)}$, and $\sigma^{(4)}$, respectively. The discussion involves the categories discussed in the second part of Notation \ref{notation}.

\begin{notation} Before doing so, we need additional combinatorial machinery in the form of significant functors between the involved categories.
\begin{itemize}
\item The categories $\catname{Cospan\Big(Comp(Cospan), Comp\Big)}$ and $\catname{Comp(3-Cospan)}$ are linked by a functor 
$$\omega^{(1)}: \catname{Cospan\Big(Comp(Cospan), Comp\Big)} \rightarrow \catname{Comp(3-Cospan)}$$ that takes $$\Bigg(\left(\left(S \xrightarrow{f}R, S' \xrightarrow{f'} R\right), \left(T \xrightarrow{g} S \times_{R}S', T' \xrightarrow{g'} S \times_{R}S'\right)\right), \left(S'' \xrightarrow{f''} R, T'' \xrightarrow{g''} S''\right)\Bigg)$$ to $$\Big(\left(f,f',f''\right),\left(g \times_{R}S'', g' \times_{R}S'',S \times_{R} S' \times_{R} g''\right)\Big).$$

\item The categories $\catname{Cospan\Big(Comp,Comp(Cospan)\Big)}$ and $\catname{Comp(3-Cospan)}$ are linked by a functor 
$$\omega^{(2)}: \catname{Cospan\Big(Comp,Comp(Cospan)\Big)} \rightarrow \catname{Comp(3-Cospan)}$$ that takes $$\Bigg(\left(S \xrightarrow{f}R, T \xrightarrow{g} S\right), \left(\left(S' \xrightarrow{f'}R, S'' \xrightarrow{f''}R\right),\left(T' \xrightarrow{g'} S' \times_{R}S'', T'' \xrightarrow{g''} S' \times_{R}S''\right)\right)\Bigg)$$ to $$\Big(\left(f,f',f''\right),\left(g \times_{R}S' \times_{R}S'',S \times_{R} g',S \times_{R} g''\right)\Big).$$

\item The categories $\catname{Cospan(3-Comp)}$ and $\catname{Comp\Big(Cospan(Comp), Cospan\Big)}$ are linked by a functor 
$$\omega^{(3)}: \catname{Cospan(3-Comp)} \rightarrow \catname{Comp\Big(Cospan(Comp), Cospan\Big)}$$ that takes $\Bigg(\left(S \xrightarrow{f}R, T \xrightarrow{g} S, Q \xrightarrow{h} T\right),\left(S' \xrightarrow{f'}R, T' \xrightarrow{g'} S', Q' \xrightarrow{h} T'\right)\Bigg)$ to $$\Bigg(\Big(\left(f,g\right),\left(f',g'\right)\Big),\left(h \times_{R} T', T \times_{R} h'\right)\Bigg).$$

\item The categories $\catname{Cospan(3-Comp)}$ and $\catname{Comp\Big(Cospan, Cospan(Comp)\Big)}$ are linked by a functor 
$$\omega^{(4)}: \catname{Cospan(3-Comp)} \rightarrow \catname{Comp\Big(Cospan, Cospan(Comp)\Big)}$$ that takes $\Bigg(\left(S \xrightarrow{f}R, T \xrightarrow{g} S, Q \xrightarrow{h} T\right),\left(S' \xrightarrow{f'}R, T' \xrightarrow{g'} S', Q' \xrightarrow{h} T'\right)\Bigg)$ to $$\Bigg(\left(f,f'\right),\Big(\left(g \times_{R} S', h \times_{R} S'\right),\left(S \times_{R} g', S \times_{R} h'\right)\Big)\Bigg).$$

\item The categories $\catname{Cospan(3-Comp)}$ and $\catname{Cospan(Comp)}$ are linked by two functors
$$\partial,\partial':\catname{Cospan(3-Comp)} \rightarrow \catname{Cospan(Comp)}$$ that take $\Bigg(\left(S \xrightarrow{f}R, T \xrightarrow{g} S, Q \xrightarrow{h} T\right),\left(S' \xrightarrow{f'}R, T' \xrightarrow{g'} S', Q' \xrightarrow{h} T'\right)\Bigg)$ to $$\Big(\left(fg,h\right),\left(f'g',h'\right)\Big)$$ and $\Bigg(\left(S \xrightarrow{f}R, T \xrightarrow{g} S, Q \xrightarrow{h} T\right),\left(S' \xrightarrow{f'}R, T' \xrightarrow{g'} S', Q' \xrightarrow{h} T'\right)\Bigg)$ to $$\Big(\left(f,gh\right),\left(f',g'h'\right)\Big).$$

\item The categories $\catname{Cospan\Big(Comp(Cospan),Comp\Big)}$ and $\catname{Cospan(Comp)}$ are linked by a functor 
$$\Delta:\catname{Cospan\Big(Comp(Cospan),Comp\Big)} \rightarrow \catname{Cospan(Comp)}$$ that takes $$\Bigg(\left(\left(S \xrightarrow{f}R, S' \xrightarrow{f'} R\right), \left(T \xrightarrow{g} S \times_{R}S', T' \xrightarrow{g'} S \times_{R}S'\right)\right), \left(S'' \xrightarrow{f''} R, T'' \xrightarrow{g''} S''\right)\Bigg)$$ to $$\Big(\left(f \times_{R} f', g \times_{S \times_{R}S'}g'\right),\left(f'',g''\right)\Big).$$

\item The categories $\catname{Cospan\Big(Comp, Comp(Cospan)\Big)}$ and $\catname{Cospan(Comp)}$ are linked by a functor 
$$\Delta':\catname{Cospan\Big(Comp,Comp(Cospan)\Big)} \rightarrow \catname{Cospan(Comp)}$$ that takes 
$$\Bigg(\left(S \xrightarrow{f}R, T \xrightarrow{g} S\right), \left(\left(S' \xrightarrow{f'}R, S'' \xrightarrow{f''}R\right),\left(T' \xrightarrow{g'} S' \times_{R}S'', T'' \xrightarrow{g''} S' \times_{R}S''\right)\right)\Bigg)$$ to $$\Big(\left(f,g\right),\left(f' \times_{R} f'', g' \times_{ S' \times_{R}S''}g''\right)\Big).$$

\item The categories $\catname{Comp(3-Cospan)}$ and $\catname{A}^{\op}$ are linked by a functor
$$\phi^{(1/2)}:\catname{Comp(3-Cospan)} \rightarrow \catname{A}^{\op}$$ that takes 
$$\Bigg(\left(S \xrightarrow{f}R, S' \xrightarrow{f'} R\right),\left(T \xrightarrow{g}S\times_{R}S', T' \xrightarrow{g'} S\times_{R}S'\right),\left(Q \xrightarrow{h}T\times_{R}T', Q' \xrightarrow{h'} T\times_{R}T'\right)\Bigg)$$ to $$\left(f \times_{R} f'\right) \circ \left(g \times_{S\times_{R}S'}g'\right) \circ \left(h \times_{T\times_{R}T'}h'\right).$$ 

\item The categories $\catname{Comp\Big(Cospan(Comp),Cospan\Big)}$ and $\catname{A}^{\op}$ are linked by a functor 
$$\phi^{(3)}:\catname{Comp\Big(Cospan(Comp),Cospan\Big)} \rightarrow \catname{A}^{\op}$$ that takes 
$$\Bigg(\left(\left(S \xrightarrow{f}R, T \xrightarrow{g}S\right),\left(S' \xrightarrow{f'}R, T' \xrightarrow{g'}S'\right)\right),\left(Q \xrightarrow{h}T \times_{R}T', Q' \xrightarrow{h'}T \times_{R}T'\right)\Bigg)$$ to $$\left(fg \times_{R} f'g'\right) \circ \left(h \times_{T \times_{R}T'} h'\right).$$

\item The categories $\catname{Comp\Big(Cospan, Cospan(Comp)\Big)}$ and $\catname{A}^{\op}$ are linked by a functor 
$$\phi^{(4)}:\catname{Comp\Big(Cospan, Cospan(Comp)\Big)} \rightarrow \catname{A}^{\op}$$ that takes 
$$\Bigg(\left(S \xrightarrow{f}R, S' \xrightarrow{f'}R\right), \left(\left(T \xrightarrow{g}S \times_{R}S', Q \xrightarrow{h} T\right),\left(T' \xrightarrow{g'}S \times_{R}S', Q' \xrightarrow{h'} T'\right)\right)\Bigg)$$ to $$\left(f \times_{R} f'\right) \circ \left(gh \times_{S \times_{R}S'}g'h'\right).$$
\end{itemize}
\end{notation}

Let us explain the construction and motivation of $t^{(4)}$, which must intuitively coincide with $t_{\Phi, \Psi \circ \Phi', \Phi'' \circ \Psi''}$. The other natural transformations mentioned above can be constructed in the same way by using the different categories and functors introduced (the ones with corresponding indices), so we will simply give their expressions at the end.

Consider the following diagram.
\[\begin{tikzcd}[column sep=tiny]
	{\catname{Cospan(3-Comp)}} && {\catname{Comp(Cospan, Cospan(Comp))}} \\
	{\catname{Cospan(Comp)}} & {\catname{Comp(Cospan)}} & {\catname{A}^\op}
	\arrow["{{\omega^{(4)}}}", from=1-1, to=1-3]
	\arrow["{{\partial'}}"', from=1-1, to=2-1]
	\arrow["{{\varphi^{(4)}}}", from=1-3, to=2-3]
	\arrow["\omega"', from=2-1, to=2-2]
	\arrow["\varphi"', from=2-2, to=2-3]
\end{tikzcd}\]
It is straightforward to check that it is commutative by construction. Hence, for any multiplicative functors $\Phi, \Psi, \Phi', \Psi', \Phi'', \Psi''$ there is a unique natural isomorphism $$\text{Lan}_{\varphi^{(4)}\omega^{(4)}}(\Phi \times \Psi \times \Phi' \times \Psi' \times \Phi'' \times \Psi'') \cong \text{Lan}_{\varphi \omega \partial'}(\Phi \times \Psi \times \Phi' \times \Psi' \times \Phi'' \times \Psi'').$$

\begin{lemma}\label{lemma_left_kan_1}
    There is a unique natural isomorphism $$\text{Lan}_{\varphi^{(4)}\omega^{(4)}}(\Phi \times \Psi \times \Phi' \times \Psi' \times \Phi'' \times \Psi'') \cong \text{Lan}_{\phi\omega} \Big(\Phi \times (\Psi \circ \Phi') \times \Psi' \times (\Phi'' \circ \Psi'')\Big).$$
\end{lemma}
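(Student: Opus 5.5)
We will prove the isomorphism by factoring the Kan extension along $\varphi^{(4)}\omega^{(4)}$ into two stages. The first stage collapses the two composable pairs $(g,h)$ and $(g',h')$ into single maps; the second stage is the Kan extension along $\phi\omega$. The tool is the standard fact $\text{Lan}_{KL} F \cong \text{Lan}_K \text{Lan}_L F$, already used in the proof of Corollary \ref{nicecorollary}. Uniqueness of the isomorphism will come from the fact that all isomorphisms involved are the canonical ones induced by universal properties of left Kan extensions. Using the commutative square $\varphi^{(4)}\omega^{(4)} = \phi\omega\partial'$ established just before the lemma, it suffices to show
\[
\text{Lan}_{\partial'}\big(\Phi \times \Psi \times \Phi' \times \Psi' \times \Phi'' \times \Psi''\big) \cong \Phi \times (\Psi \circ \Phi') \times \Psi' \times (\Phi'' \circ \Psi'')
\]
as functors on $\catname{Cospan(Comp)}$, up to the appropriate reordering of the six factors. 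Applying $\text{Lan}_{\phi\omega}$ to both sides then gives the claim.

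Next we analyse $\partial'$. It sends $\big((f,g,h),(f',g',h')\big)$ to $\big((f,gh),(f',g'h')\big)$. So $\partial'$ is the identity on the cospan part $(f,f')$ and applies the composition functor $\epsilon:\catname{Comp}\to\A^\op$ of Lemma \ref{operations_kan} separately to $(g,h)$ and to $(g',h')$. More precisely, $\catname{Cospan(3-Comp)}$ is the fibre product of $\catname{Comp}\times\catname{Comp}$ (the pairs $(g,h),(g',h')$ together with $f,f'$) over the constraint that $\text{Cod}(f)=\text{Cod}(f')$ and $\text{Dom}(f)=\text{Cod}(g)$, $\text{Dom}(f')=\text{Cod}(g')$. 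Thus $\partial'$ is, fibrewise over the $(f,f')$ data, the product $\id \times \epsilon \times \id \times \epsilon$. Since the functor being extended is an external product and $\otimes$ in $\C$ commutes with colimits in each variable (as $\C$ is closed), the coend formula splits into a product of coends, giving
\[
\text{Lan}_{\partial'}\big(\Phi\times\Psi\times\Phi'\times\Psi'\times\Phi''\times\Psi''\big)\big((f,k),(f',k')\big)\cong \Phi(f)\otimes \text{Lan}_\epsilon(\Psi\times\Phi')(k)\otimes\Psi'(f')\otimes\text{Lan}_\epsilon(\Phi''\times\Psi'')(k').
\]
By part 2) of Lemma \ref{operations_kan}, the middle factors are precisely $\Psi\circ\Phi'$ and $\Phi''\circ\Psi''$ in the abuse of notation fixed above.

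The main obstacle is justifying this splitting rigorously. The difficulty is that the constraint $\text{Dom}(f)=\text{Cod}(g)$ couples the untouched factor $f$ with the collapsed pair. Morphisms in $\catname{Cospan(3-Comp)}$ over a fixed morphism of $\catname{Cospan(Comp)}$ involve bijections on $R,S,S'$ that also act on the codomains of $g,g'$. To handle this, we will compute the comma category $\partial'\downarrow\big((f,k),(f',k')\big)$ explicitly. We will show that it is equivalent (cofinally) to the product of the comma categories $\epsilon\downarrow k$ and $\epsilon\downarrow k'$, with the codomains of $g$ and $g'$ pinned to $\text{Dom}(f)$ and $\text{Dom}(f')$. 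This works because $\A^\op$ is a groupoid, so any object of the comma category can be transported along the given bijections to one in which $f,f'$ are literally the given maps. After this reduction, the colimit over a product of categories of an external tensor product is the tensor product of the colimits. Finally, naturality in $\big((f,k),(f',k')\big)$ and compatibility with the unit of the Kan extension follow from the same identifications, which completes the construction of the isomorphism.
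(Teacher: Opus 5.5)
Your proposal is correct and follows the paper's proof. Like the paper, you use the factorization $\varphi^{(4)}\omega^{(4)}=\phi\omega\partial'$ and $\text{Lan}_{KL}\cong\text{Lan}_K\text{Lan}_L$, then identify $\text{Lan}_{\partial'}$ with $\Phi\times(\Psi\circ\Phi')\times\Psi'\times(\Phi''\circ\Psi'')$ via part 2) of Lemma \ref{operations_kan}. Your comma-category argument (transporting along bijections to pin $f,f'$ and reducing to independent factorizations of $k$ and $k'$) supplies the justification the paper skips when it writes $\partial'=\partial_0'\times\partial_0'$ and says Kan extensions preserve products, which matters because $\catname{Cospan(3-Comp)}$ is not literally a product category.
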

\begin{proof}
    By elementary properties of left Kan extensions we get $$\text{Lan}_{\varphi \omega \partial'}(\Phi \times \Psi \times \Phi' \times \Psi' \times \Phi'' \times \Psi'') \cong \text{Lan}_{\varphi \omega}\Big(\text{Lan}_{\partial'}(\Phi \times \Psi \times \Phi' \times \Psi' \times \Phi'' \times \Psi'')\Big).$$ Now, let us analyze what the functor $\partial'$ does on each of the two components of $\catname{3-Comp}$ in $\catname{Cospan(3-Comp)} \subseteq \catname{(3-Comp)}^{\times 2}$. Clearly, $\partial'$ acts separately on each of the copies of $\catname{3-Comp}$, so it is of the form $\partial' = \partial_0' \times \partial_0'$, where $\partial_0':\catname{3-Comp} \rightarrow \catname{Comp}$ composes the last two maps. Hence, $$\text{Lan}_{\partial'}(\Phi \times \Psi \times \Phi' \times \Psi' \times \Phi'' \times \Psi'') \cong \text{Lan}_{\partial_0'}(\Phi \times \Psi \times \Phi') \times  \text{Lan}_{\partial_0'}(\Psi' \times \Phi'' \times \Psi'')$$ because left Kan extensions preserves products. Now, $\partial_0'$ acts identically on the first copy of $\catname{A}^\op$ in $\catname{3-Comp}$ and acts by composition on the last two copies, so $$\text{Lan}_{\partial_0'}(\Phi \times \Psi \times \Phi') \cong \Phi \times (\Psi \circ \Phi')$$ and $$\text{Lan}_{\partial_0'}(\Psi' \times \Phi'' \times \Psi'') \cong \Psi' \times (\Phi'' \circ \Psi'').$$
    All mentioned natural isomorphisms are unique. By combining those isomorphisms with the observation above the lemma, we obtain the desired conclusion.
\end{proof}

\begin{lemma}\label{lemma_left_kan_2}
    There is a unique natural isomorphism 
    $$\text{Lan}_{\varphi^{(4)}}(\Phi \times \Psi \times \Phi' \times \Psi' \times \Phi'' \times \Psi'') \cong \text{Lan}_{\varphi}\Big(\Phi \times \Psi' \times (\Psi \circ \Phi') \times (\Phi'' \circ \Psi'')\Big).$$
\end{lemma}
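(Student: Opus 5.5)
We mirror the proof of Lemma \ref{lemma_left_kan_1}, replacing the functor $\partial'$ by a functor that forgets structure on the intermediate category $\catname{Comp\Big(Cospan, Cospan(Comp)\Big)}$. Let
$$\rho: \catname{Comp\Big(Cospan, Cospan(Comp)\Big)} \rightarrow \catname{Comp(Cospan)}$$
be the functor that composes the two maps in each component of the $\catname{Cospan(Comp)}$ part. It sends
$$\Bigg(\left(f,f'\right),\Big(\left(g,h\right),\left(g',h'\right)\Big)\Bigg) \mapsto \Big(\left(f,f'\right),\left(gh,g'h'\right)\Big).$$
Directly from the definitions of $\phi^{(4)}$ and $\phi$ we have $\phi^{(4)} = \phi \rho$, since both send the object above to $\left(f\times_R f'\right)\circ\left(gh \times_{S\times_R S'} g'h'\right)$. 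Composition of left Kan extensions then gives the unique natural isomorphism
$$\text{Lan}_{\phi^{(4)}}(\Phi \times \Psi \times \Phi' \times \Psi' \times \Phi'' \times \Psi'') \cong \text{Lan}_{\phi}\Big(\text{Lan}_{\rho}(\Phi \times \Psi \times \Phi' \times \Psi' \times \Phi'' \times \Psi'')\Big).$$
Here the six-fold product is taken with the variable ordering dictated by the objects of $\catname{Comp\Big(Cospan, Cospan(Comp)\Big)}$: $\Phi,\Psi'$ on $(f,f')$, then $\Psi,\Phi'$ on $(g,h)$, and $\Phi'',\Psi''$ on $(g',h')$.

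It remains to identify $\text{Lan}_\rho$. The functor $\rho$ is the identity on the $\catname{Cospan}$ factor $(f,f')$. On each of the two composable pairs $(g,h)$ and $(g',h')$ it acts by the composition functor $\epsilon$ of Lemma \ref{operations_kan}. As in the proof of Lemma \ref{lemma_left_kan_1}, we want to write $\rho$ as a product $\id \times \epsilon \times \epsilon$ and use that left Kan extensions of external products along product functors are external products of the left Kan extensions. Then Lemma \ref{operations_kan} part 2) gives
$$\text{Lan}_\rho(\cdots) \cong \Phi \times \Psi' \times (\Psi \circ \Phi') \times (\Phi'' \circ \Psi''),$$
and combining the two isomorphisms yields the claim. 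Uniqueness holds because each isomorphism is the canonical one induced by universal properties.

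The main obstacle is that $\rho$ is not literally a product functor. The source and target are not products of categories: they are the subcategories cut out by the conditions $\text{Cod}(g)=\text{Cod}(g')=S\times_R S'$ (and on the target side $\text{Cod}(gh)=\text{Cod}(g'h')=S\times_R S'$). So the splitting must be done fibrewise. I would use the pointwise coend formula for $\text{Lan}_\rho$ evaluated at an object $\big((f,f'),(k,k')\big)$ of $\catname{Comp(Cospan)}$. The comma category of $\rho$ over this object decomposes, up to the groupoid isomorphisms fixing $R,S,S'$, as a product of the comma category of $\epsilon$ over $k$ with the comma category of $\epsilon$ over $k'$. This works because morphisms in $\catname{A}^\op$ are pairs of bijections, so a morphism into the fixed cospan $(f,f')$ is rigid once the bijections on $S,S'$ are chosen. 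Since $\otimes$ in $\C$ preserves colimits in each variable (as $\C$ is closed), the coend then splits as a tensor product of the two coends computing $\Psi\circ\Phi'$ and $\Phi''\circ\Psi''$. I would verify this in the concrete case $\C=\Set$, as in Lemma \ref{operations_kan}.
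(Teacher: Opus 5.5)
Your proposal is correct and follows essentially the same route as the paper. Your $\rho$ is exactly the paper's $\id_{\catname{Cospan}} \times F$ (with $F$ composing the two pairs in $\catname{Cospan(Comp)}$), and both arguments use transitivity of left Kan extensions, split off $\Phi \times \Psi'$, and identify the two remaining factors as composition products via Lemma \ref{operations_kan}. The paper justifies the splitting simply as ``left Kan extensions preserve products''. Your fibrewise check, $(\rho \downarrow e) \simeq (\epsilon \downarrow k) \times (\epsilon \downarrow k')$ after normalizing the cospan part together with cocontinuity of $\otimes$ in each variable, is a valid and more careful justification of that step, since the functor is not literally a product functor.
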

\begin{proof}
To this end, let us analyze what the functor $\varphi^{(4)}$ does. We look at $$\catname{Comp\Big(Cospan, Cospan(Comp)\Big)}$$ as a subcategory of $$\catname{Cospan} \times \catname{Cospan(Comp)}.$$ Clearly, if we look at the functor $$F: \catname{Cospan(Comp)} \rightarrow \catname{Cospan}$$ that composes the two pairs, restriction and corestriction yield a commutative diagram.
\[\begin{tikzcd}
	{\catname{Comp\Big(Cospan, Cospan(Comp)\Big)}} && {\catname{Comp(Cospan)}} \\
	\\
	{\catname{A}^\op}
	\arrow["{\id_{\catname{Cospan}} \times F}", from=1-1, to=1-3]
	\arrow["{\varphi^{(4)}}"', from=1-1, to=3-1]
	\arrow["\varphi", from=1-3, to=3-1]
\end{tikzcd}\]
Again by properties of left Kan extensions we infer 
\begin{align*}
    &\text{Lan}_{\varphi^{(4)}}(\Phi \times \Psi \times \Phi' \times \Psi' \times \Phi'' \times \Psi'') \\ &\cong \text{Lan}_{\varphi}\Big(\text{Lan}_{\id_{\catname{Cospan}} \times F}(\Phi \times \Psi \times \Phi' \times \Psi' \times \Phi'' \times \Psi'')\Big) \\
    &\cong \text{Lan}_{\varphi}\Big(\text{Lan}_{\id}(\Phi \times \Psi') \times \text{Lan}_{F}(\Psi \times \Phi' \times \Phi'' \times \Psi'')\Big) \\
    &\cong \text{Lan}_{\varphi}\Big(\Phi \times \Psi' \times \text{Lan}_{F}(\Psi \times \Phi' \times \Phi'' \times \Psi'')\Big).
\end{align*}
On each of the two copies of $\catname{Comp}$ in $\catname{Cospan(Comp)}$, $F$ acts by composition and therefore $$\text{Lan}_{F}(\Psi \times \Phi' \times \Phi'' \times \Psi'') \cong (\Psi \circ \Phi') \times (\Phi'' \circ \Psi'').$$ 
We are done because all of the above are unique natural isomorphisms.
\end{proof}

Consider the following diagram.
\[\begin{tikzcd}[column sep = huge, row sep = large]
	{\catname{A}^\op} \\
	\\
	{\catname{Comp(Cospan)}} \\
	\\
	{\catname{Cospan(Comp)}} \\
	\\
	{\catname{Cospan(3-Comp)}} && {\catname{C}} \\
	\\
	{\catname{Comp\Big(Cospan,Cospan(Comp)\Big)}} \\
	\\
	{\catname{A}^{\op}}
	\arrow[color=blue,""{name=0, anchor=center, inner sep=0}, "{\scalebox{0.6}{$\text{Lan}_\phi \big(\Phi \times \Psi' \times (\Psi \circ \Phi') \times (\Phi'' \circ \Psi'')\big)$}}"{pos=0.8}, curve={height=-30pt}, from=1-1, to=7-3]
	\arrow[color=red,""{name=1, anchor=center, inner sep=0}, "{\scalebox{0.6}{$\text{Lan}_{\phi\omega} \big(\Phi \times (\Psi \circ \Phi') \times \Psi' \times (\Phi'' \circ \Psi'')\big)$}}"{description, pos=0.4}, curve={height=-6pt}, from=1-1, to=7-3]
	\arrow["{{{\scriptscriptstyle \phi}}}", from=3-1, to=1-1]
	\arrow["{{{\scriptscriptstyle \omega}}}", from=5-1, to=3-1]
	\arrow[""{name=2, anchor=center, inner sep=0}, "{{{{{\scalebox{0.6}{$\Phi \times (\Psi \circ \Phi') \times \Psi' \times (\Phi'' \circ \Psi'')$}}}}}}"', from=5-1, to=7-3]
	\arrow["{{{\scriptscriptstyle \partial'}}}", from=7-1, to=5-1]
	\arrow[""{name=3, anchor=center, inner sep=0}, "{{{{{\scalebox{0.6}{$\Phi \times \Psi \times \Phi' \times \Psi' \times \Phi'' \times \Psi''$}}}}}}"{description}, from=7-1, to=7-3]
	\arrow["{{{\scriptscriptstyle \omega^{(4)}}}}"', from=7-1, to=9-1]
	\arrow["{{{{\scalebox{0.6}{$\Phi \times \Psi' \times \Psi \times \Phi' \times \Phi'' \times \Psi''$}}}}}"{description, pos=0.3}, from=9-1, to=7-3]
	\arrow["{{{\scriptscriptstyle \phi^{(4)}}}}"', from=9-1, to=11-1]
	\arrow[color=blue,""{name=4, anchor=center, inner sep=0}, "{{{{{\scalebox{0.6}{$\text{Lan}_{\varphi^{(4)}} \big(\Phi \times \Psi' \times \Psi \times \Phi' \times \Phi'' \times \Psi''\big)$}}}}}}"'{pos=0.8}, curve={height=30pt}, from=11-1, to=7-3]
	\arrow[color=red,""{name=5, anchor=center, inner sep=0}, "{{{{\scalebox{0.6}{$\text{Lan}_{\phi^{(4)} \omega^{(4)}} \big(\Phi \times \Psi \times \Phi' \times \Psi' \times \Phi'' \times \Psi''\big)$}}}}}"{description, pos=0.3}, curve={height=6pt}, from=11-1, to=7-3]
	\arrow[color=green,"{{{\scriptscriptstyle \sigma}}}"', shorten <=4pt, shorten >=4pt, Rightarrow, from=1, to=0]
	\arrow[color=orange,"{\scriptscriptstyle t}"{description}, curve={height=-6pt}, shorten <=17pt, Rightarrow, from=2, to=3-1]
 \arrow["{{{{{\scalebox{0.6}{$\Phi \times \Psi' \times (\Psi \circ \Phi') \times (\Phi'' \circ \Psi'')$}}}}}}"{description}, from=3-1, to=7-3]
	\arrow[color=orange,"{{{{{{\scriptscriptstyle t^{(4)}}}}}}}"{description}, curve={height=12pt}, shorten <=17pt, Rightarrow, from=3, to=9-1]
	\arrow[color=green,"{{{\scriptscriptstyle \sigma^{(4)}}}}", shorten <=4pt, shorten >=4pt, Rightarrow, from=5, to=4]
\end{tikzcd}\]

By Lemmas \ref{lemma_left_kan_1} and \ref{lemma_left_kan_2} (the relevant left Kan extensions are color-coded in the diagram), to obtain $\sigma^{(4)}$ coinciding with $\sigma$ up to the two unique natural isomorphisms, it suffices to define $t^{(4)}$ so that it agrees with $t$ up to the left Kan extensions yielding the composition on the two indicated pairs of functors. Such a $t^{(4)}$ induces $\sigma^{(4)}$ by the uniqueness of left Kan extensions up to natural transformation. The fact that $t^{(4)}$ and $t$ agree follows directly from their construction, which we provide below, along with the expressions for $t^{(1)}, t^{(2)}$, and $t^{(3)}$.

\begin{notation}
For $\Phi \in [\catname{A}^{\op}, \catname{C}]_{\text{mult}}$ and $T \xrightarrow{g} S, S \rightarrow R$, and $S' \rightarrow R$ in $\catname{A}^\op$, we construct a natural morphism $$\delta_{\Phi} : \Phi(g)  \rightarrow \Phi\left(g \times_R S'\right) $$ in the following way. We have the isomorphisms
$$\Phi(g) \cong \prod_{s \in S} \Phi\left(g^{-1}(\{s\}) \rightarrow \{s\}\right)$$ and
$$\Phi\left(g \times_R S'\right) \cong \prod_{(s,s') \in S \times_{R}S'} \Phi\left(g^{-1}(\{s\}) \times \{s'\} \rightarrow \{(s,s')\}\right),$$
and morphisms in $\catname{A}^\op$ for every pair $(s,s')$ given by 
$$\left(\id \times \{s'\}, \iota \right): \left(g^{-1}(\{s\}) \rightarrow \{s\}\right) \rightarrow \left(g^{-1}(\{s\}) \times \{s'\} \rightarrow \{(s,s')\}\right).$$
Let then $$\delta_{\Phi} = \prod_{(s,s') \in S \times_{R}S'} \Phi\left((\id \times \{s'\}, \iota)^{-1}\right).$$
Similarly, given additionally $S'' \rightarrow R$ in $\catname{A}^{\op}$, we construct a natural morphism $${\widetilde \delta}_{\Phi}: \Phi(g) \rightarrow \Phi\left(g \times_{R}S' \times_{R}S''\right),$$ which is simply an iterated version of $\delta_{\Phi}$.
\end{notation}

Now, given $\left(S \xrightarrow{f}R, T \xrightarrow{g}S\right),\left(S' \xrightarrow{f'}R, T' \xrightarrow{g'}S'\right)$, we define $$t: \Phi(f) \times \Psi(g) \times \Phi'\left(f'\right) \times \Psi'\left(g'\right) \rightarrow \Phi(f) \times \Phi'\left(f'\right) \times \Psi\left(g \times_{R}S'\right) \times \Psi'\left(S \times_{R}g'\right)$$ by $$t\left(x,y,x',y'\right) = \Big(x,x',\delta_{\Psi}(y), \delta_{\Psi'}\left(y'\right)\Big).$$

Omitting domains and codomains (which correspond to the respective notations for the $\omega^{(-)}$ functors), we also define
\begin{align*}
    t^{(1)}\left(x,x',y,y',x'',y''\right) &= \Big(x,x',x'',\delta_{\Psi}(y), \delta_{\Psi'}\left(y'\right), {\widetilde \delta}_{\Psi''}\left(y''\right)\Big) \\
    t^{(2)}\left(x,y,x',x'',y',y''\right) &=\Big(x,x',x'',{\widetilde \delta}_{\Psi}(y), \delta_{\Psi'}\left(y'\right), \delta_{\Psi''}\left(y''\right)\Big) \\
    t^{(3)}\left(x,y,x',y',x'',y''\right) &= \Big(x,y,y',x'',{\widetilde \delta}_{\Phi'}\left(x'\right),{\widetilde \delta}_{\Psi''}\left(y''\right)\Big) \\
    t^{(4)}\left(x,y,x',y',x'',y''\right) &= \Big(x,y',\delta_{\Psi}(y), \delta_{\Phi'}\left(x'\right), \delta_{\Phi''}\left(x''\right), \delta_{\Psi''}\left(y''\right)\Big).
\end{align*}

Let us analyze why $t$ and $t^{(4)}$ correspond to one another under the identifications mentioned above. The domain of $t$ is
$$\Phi(f) \times \left(\Psi \circ \Phi'\right)\left(g \circ h\right) \times \Psi'\left(f'\right) \times \left(\Phi'' \circ \Psi''\right)\left(g' \circ h'\right)$$
and its codomain is
$$\Phi(f) \times \Psi'\left(f'\right) \times \left(\Psi \circ \Phi'\right)\Big((g \circ h) \times_{R}S'\Big) \times \left(\Phi'' \circ \Psi''\right)\Big(S \times_{R} \left(g' \circ h'\right)\Big) $$
whereas the domain of $t^{(4)}$ is
$$\Phi(f) \times \Psi(g) \times \Phi'(h) \times \Psi'\left(f'\right) \times \Phi''\left(g'\right) \times \Psi''\left(h'\right)$$
and its codomain is
$$\Phi(f) \times \Psi'\left(f'\right) \times \Psi\left(g \times_{R}S'\right) \times \Phi'\left(h \times_{R}S'\right) \times \Phi''\left(S \times_{R}S'\right) \times\Psi''\left(S \times_{R}h'\right).$$
The components corresponding to $\Phi$ and $\Psi'$ are mapped identically to the first two components of the codomain in both transformations. Up to the left Kan extensions yielding the composition of symmetric sequences on the last two pairs of functors, $t$ and $t^{(4)}$ coincide, since the composition of fiber products is the fiber product of the composition. This gives the desired identification.

\subsection{Proof of the main theorem}
We are now ready to prove Theorem \ref{2-monoidal}.
\begin{proof}[Proof of Theorem \ref{2-monoidal}]
The arguments presented hold in the concrete setting but carry over completely to the non-concrete case.
Observe that in the passage from compositions (whether it is the composition or the matrix monoidal structure) of symmetric sequences to external products of functors, the associator becomes a change of parenthesization. 

By the equivalence proved in Lemma \ref{equivalence} and Corollary \ref{nicecorollary}, as well as the constructions of the designated natural transformations, the coherence diagrams (\ref{diagram_2-monoidal_associativity_1}) and (\ref{diagram_2-monoidal_associativity_2}) of 2-monoidal categories in Definition \ref{definition_2-monoidal} commute if and only if the diagrams on the following page commute. The first one for all $$((f,g),(f',g'),(f'',g'')) \in \catname{3-Cospan(Comp)}$$ and the second one for all $$ ((f,g,h),(f',g',h')) \in \catname{Cospan(3-Comp)}.$$

\newpage
\begin{landscape} 
\vspace*{\fill}
\adjustbox{width = {20cm}}{  
\centering
$\begin{tikzcd}
	{\Phi(f) \times \Psi(g) \times \Phi'(f') \times \Psi'(g') \times \Phi''(f'') \times \Psi''(g'')} & {\Phi(f) \times \Psi(g) \times \Phi'(f') \times \Phi''(f'') \times \Psi'(g' \times_R S'') \times \Psi''(S' \times_R g'')} \\
	\\
	{\Phi(f) \times \Phi'(f') \times \Psi(g \times_R S')  \times \Psi'(S \times_R g') \times \Phi''(f'') \times \Psi''(g'')} & {\Phi(f) \times \Phi'(f') \times \Phi''(f'') \times \Psi(g \times_R S' \times_R S'') \times \Psi'(S \times_R g' \times_R S'') \times \Psi''(S \times_R S' \times_R g'')} \\
	{(x,y,x',y',x'',y'')} & {(x,y,x',x'', \delta_{\Psi'}(y'), \delta_{\Psi''}(y''))} \\
	& {(x,x',x'',{\widetilde \delta}_\Psi(y),\delta_{\Psi'}(\delta_{\Psi'}(y')),\delta_{\Psi''}(\delta_{\Psi''}(y'')))} \\
	{(x,x',\delta_\Psi(y),\delta_{\Psi'}(y'),x'',y'')} & {(x,x',x'',\delta_\Psi(\delta_\Psi(y)),\delta_{\Psi'}(\delta_{\Psi'}(y')),{\widetilde \delta}_{\Psi''}(y''))}
	\arrow["{{{\id \times \id \times t}}}", from=1-1, to=1-2]
	\arrow["{{{t \times \id \times \id}}}"', from=1-1, to=3-1]
	\arrow["{{{t^{(2)}}}}", from=1-2, to=3-2]
	\arrow["{{{t^{(1)}}}}"', from=3-1, to=3-2]
	\arrow[maps to, from=4-1, to=4-2]
	\arrow[maps to, from=4-1, to=6-1]
	\arrow[maps to, from=4-2, to=5-2]
	\arrow[maps to, from=6-1, to=6-2]
	\arrow[equals, from=6-2, to=5-2]
\end{tikzcd}$
} \vspace{1.85cm} \\
\adjustbox{width = {20cm}}{  
\centering
$\begin{tikzcd}
	{\Phi(f) \times \Psi(g) \times \Phi'(h) \times \Psi'(f') \times \Phi''(g') \times \Psi''(h')} & {\Phi(f) \times \Psi'(f') \times \Psi(g \times_R S') \times \Phi'(h \times_R S') \times \Phi''(S \times_R g') \times \Psi''(S \times_R h')} \\
	\\
	{\Phi(f) \times \Psi(g) \times \Psi'(f') \times \Phi''(g') \times \Phi'(h \times_R S' \times_R S'') \times \Psi''(S \times_R h' \times_R S'')} & {\Phi(f) \times \Psi'(f') \times \Psi(g \times_R S') \times \Phi''(S \times_R g') \times \Phi'(h \times_R S' \times_R S'') \times \Psi''(S \times_R h' \times_R S'')} \\
	{(x,y,x',y',x'',y'')} & {(x,y',\delta_{\Psi}(y),\delta_{\Phi'}(x'),\delta_{\Phi''}(x''),\delta_{\Psi''}(y''))} \\
	& {(x,y',\delta_{\Psi}(y),\delta_{\Phi''}(x''),\delta_{\Phi'}(\delta_{\Phi'}(x')),\delta_{\Psi''}(\delta_{\Psi''}(y'')))} \\
	{(x,y,y',x'',{\widetilde \delta}_{\Phi'}(x'), {\widetilde \delta}_{\Psi''}(y''))} & {(x,y',\delta_{\Psi}(y),\delta_{\Phi''}(x''),{\widetilde \delta}_{\Phi'}(x'), {\widetilde \delta}_{\Psi''}(y''))}
	\arrow["{{t^{(4)}}}", from=1-1, to=1-2]
	\arrow["{{t^{(3)}}}"', from=1-1, to=3-1]
	\arrow["{{\id \times \id \times t}}", from=1-2, to=3-2]
	\arrow["{{t \times \id \times \id}}"', from=3-1, to=3-2]
	\arrow[maps to, from=4-1, to=4-2]
	\arrow[maps to, from=4-1, to=6-1]
	\arrow[maps to, from=4-2, to=5-2]
	\arrow[maps to, from=6-1, to=6-2]
	\arrow[equals, from=6-2, to=5-2]
\end{tikzcd}$
} \\
\vspace*{\fill}
\end{landscape}
\newpage

As shown in the mapping diagrams, these do indeed commute. Moreover, since the two monoidal laws share the same unit, $\Delta_{\mathcal{J}}$ and $\mu_{\mathcal{J}}$ are both the coherence isomorphisms, and $$\iota_{\mathcal{J}} = \epsilon_{\mathcal{J}} = \id_{\mathcal{J}}.$$ The four unitality diagrams (\ref{diagram_2-monoidal_unitality_1}) and (\ref{diagram_2-monoidal_unitality_2}) can be verified to commute as well. Finally, as the monoidal unit is a bimonoid, the compatibility of units holds. This concludes the proof.
\end{proof}

\section{Consequences of the 2-monoidal structure on symmetric sequences}
\label{section_implications}
We now study some consequences of the 2-monoidal structure of the category of symmetric sequences. 

\subsection{Monoids, comonoids, bimonoids, and double (co)monoids}
The next corollary is a consequence of Proposition 6.35 of \cite{Aguiar2010}, which provides an extensive source of Day-operads.

\begin{corollary}[\cite{Aguiar2010}] \label{corollary_source_Day_operads}
    If $\mathcal{P}$ and $\mathcal{Q}$ are Day-operads, i.e., monoids in the monoidal category of symmetric sequences equipped with the matrix monoidal structure $(\catname{sSeq}, \square, \mathcal{J})$, then $\mathcal{P} \circ \mathcal{Q}$ is a Day-operad as well.
\end{corollary}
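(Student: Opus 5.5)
The plan is to build the monoid structure on $\mathcal{P} \circ \mathcal{Q}$ directly out of the 2-monoidal data of Theorem \ref{2-monoidal}. This is the standard argument behind Proposition 6.35 of \cite{Aguiar2010}: in a 2-monoidal category $(\C,\square,I,\circ,J)$, the functor $\circ : \C \times \C \to \C$ is lax monoidal with respect to $\square$. Its lax structure consists of the interchange law $\zeta$ and the unit map $\Delta_I : I \to I \circ I$. A lax monoidal functor sends monoids to monoids, and a pair of monoids is a monoid in $(\C\times\C, \square\times\square, (I,I))$. Applying this with $\C = \catname{sSeq}$, $\square$ the matrix product, $\circ$ the composition product and $I = J = \mathcal{J}$ gives the claim.

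Concretely, let $m_{\mathcal{P}}, e_{\mathcal{P}}$ and $m_{\mathcal{Q}}, e_{\mathcal{Q}}$ be the Day-operad structures. Define the multiplication on $\mathcal{P}\circ\mathcal{Q}$ as the composite
\[ (\mathcal{P}\circ\mathcal{Q}) \square (\mathcal{P}\circ\mathcal{Q}) \xrightarrow{\zeta} (\mathcal{P}\square\mathcal{P})\circ(\mathcal{Q}\square\mathcal{Q}) \xrightarrow{m_{\mathcal{P}}\circ m_{\mathcal{Q}}} \mathcal{P}\circ\mathcal{Q}. \]
Here $\zeta$ is the natural transformation $\sigma$ of (\ref{natural_sigma}). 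Define the unit as
\[ \mathcal{J} \xrightarrow{\Delta_{\mathcal{J}}} \mathcal{J}\circ\mathcal{J} \xrightarrow{e_{\mathcal{P}}\circ e_{\mathcal{Q}}} \mathcal{P}\circ\mathcal{Q}, \]
where $\Delta_{\mathcal{J}}$ is the unitor isomorphism, as in the proof of Theorem \ref{2-monoidal}.

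Associativity is a diagram chase in three layers.
\begin{enumerate}[1)]
\item Apply the associativity axiom (\ref{diagram_2-monoidal_associativity_1}) with $A=C=E=\mathcal{P}$ and $B=D=F=\mathcal{Q}$. It says that the two ways of applying $\zeta$ twice agree up to $a^\square \circ a^\square$.
\item Use naturality of $\zeta$ to move $m_{\mathcal{P}}\circ m_{\mathcal{Q}}$ past the second application of $\zeta$.
\item Use functoriality of $\circ$ to combine $(m_{\mathcal{P}}\circ m_{\mathcal{Q}})\cdot((m_{\mathcal{P}}\square\id)\circ(m_{\mathcal{Q}}\square\id))$ into $(m_{\mathcal{P}}(m_{\mathcal{P}}\square\id))\circ(m_{\mathcal{Q}}(m_{\mathcal{Q}}\square\id))$. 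Then invoke associativity of $\mathcal{P}$ and $\mathcal{Q}$ separately.
\end{enumerate}

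Left and right unitality go the same way. First use the unitality axiom (\ref{diagram_2-monoidal_unitality_1}) with $A=\mathcal{P}$ and $B=\mathcal{Q}$. Then use naturality of $\zeta$ in its first and third arguments to slide $e_{\mathcal{P}}\circ e_{\mathcal{Q}}$ past $\zeta$. Finally use the unit axioms of $\mathcal{P}$ and $\mathcal{Q}$.

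The main obstacle is bookkeeping rather than anything conceptual. It lies in writing the associativity square so that the naturality step matches exactly the shape of diagram (\ref{diagram_2-monoidal_associativity_1}), including the placement of the associators $a^\square$ on both sides. I would draw one large diagram with (\ref{diagram_2-monoidal_associativity_1}) on top, naturality squares in the middle, and the two monoid associativity squares joined by $\circ$ at the bottom. No new computation in terms of multiplicative functors should be needed, since all the required coherence is already contained in Theorem \ref{2-monoidal}.
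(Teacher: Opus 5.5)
Your proposal is correct and takes essentially the paper's approach. The paper simply combines Theorem \ref{2-monoidal} with Proposition 6.35 of \cite{Aguiar2010}, and you have written out the proof of that proposition: $\circ$ is lax monoidal with respect to $\square$ via $\zeta$ and $\Delta_{\mathcal{J}}$, with (\ref{diagram_2-monoidal_associativity_1}) and (\ref{diagram_2-monoidal_unitality_1}) as the lax coherence conditions, so it carries pairs of Day-operads to Day-operads.
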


Using the theory developed in \cite{Aguiar2010}, we can say more about the relations between the monoids and comonoids of the two monoidal structures. We briefly define the notions of bimonoids and double (co)monoids.

\begin{definition}
    Let $(\catname{C},\square, I, \circ, J)$ be a 2-monoidal category. A \textit{bimonoid} in $\catname{C}$ is a quintuple $(H,\mu,\iota,\Delta,\epsilon)$ where $(H,\mu,\iota)$ is a monoid in $(\C,\square,I)$, $(H,\Delta,\epsilon)$ is a comonoid in $(\C,\circ,J)$, and the two structures are compatible in the sense that the following four diagrams commute.
\[\begin{tikzcd}
	{(H \circ H)\square (H \circ H)} && {(H\square H)\circ (H \square H)} \\
	{H \square H} & H & {H \circ H}
	\arrow["{\zeta_{H,H,H,H}}", from=1-1, to=1-3]
	\arrow["{\mu \circ \mu}", from=1-3, to=2-3]
	\arrow["{\Delta \square \Delta}", from=2-1, to=1-1]
	\arrow["\mu"', from=2-1, to=2-2]
	\arrow["\Delta"', from=2-2, to=2-3]
\end{tikzcd}\]
\[\begin{tikzcd}
	{H \square H} & {J \square J} && I & H \\
	H & J && {I \circ I} & {H \circ H}
	\arrow["{\epsilon \square \epsilon}", from=1-1, to=1-2]
	\arrow["\mu"', from=1-1, to=2-1]
	\arrow["{\mu_J}", from=1-2, to=2-2]
	\arrow["\iota", from=1-4, to=1-5]
	\arrow["{\Delta_I}"', from=1-4, to=2-4]
	\arrow["\Delta", from=1-5, to=2-5]
	\arrow["\epsilon"', from=2-1, to=2-2]
	\arrow["{\iota \circ \iota}"', from=2-4, to=2-5]
\end{tikzcd}\]
\[\begin{tikzcd}
	& H \\
	I && J
	\arrow["\epsilon", from=1-2, to=2-3]
	\arrow["\iota", from=2-1, to=1-2]
	\arrow["{\iota_J = \epsilon_I}"', from=2-1, to=2-3]
\end{tikzcd}\]
    A morphism of bimonoids is a morphism of the underlying monoids and comonoids. 
    
    The category of bimonoids in $(\catname{C},\square, I, \circ, J)$ is denoted by $\text{Bimon}(\catname{C},\square, I, \circ, J)$.
\end{definition}

\begin{definition}
    Let $(\catname{C},\square, I, \circ, J)$ be a 2-monoidal category. A \textit{double monoid} in $\catname{C}$ is an object $A$ equipped with morphisms
\[\begin{tikzcd}[column sep=scriptsize]
	{A \square A} & A && I & A && {A \circ A} & A && {J } & A,
	\arrow[from=1-1, to=1-2]
	\arrow[from=1-4, to=1-5]
	\arrow[from=1-7, to=1-8]
	\arrow[from=1-10, to=1-11]
\end{tikzcd}\]
    which turn $A$ into a monoid in both $(\catname{C},\square,I)$ and $(\catname{C},\circ,J)$, and make the following diagrams commute.
\[\begin{tikzcd}
	{(A \circ A) \square (A \circ A)} && {(A \square A) \circ (A \square A)} \\
	{A \square A} & A & {A \circ A}
	\arrow["{\zeta_{A,A,A,A}}", from=1-1, to=1-3]
	\arrow[from=1-1, to=2-1]
	\arrow[from=1-3, to=2-3]
	\arrow[from=2-1, to=2-2]
	\arrow[from=2-3, to=2-2]
\end{tikzcd}\]
\[\begin{tikzcd}
	{J \square J} & {A \square A} && {I \circ I} & {A \circ A} \\
	J & A && I & A
	\arrow[from=1-1, to=1-2]
	\arrow["{\mu_J}"', from=1-1, to=2-1]
	\arrow[from=1-2, to=2-2]
	\arrow[from=1-4, to=1-5]
	\arrow[from=1-5, to=2-5]
	\arrow[from=2-1, to=2-2]
	\arrow["{\Delta_I}", from=2-4, to=1-4]
	\arrow[from=2-4, to=2-5]
\end{tikzcd}\]
\[\begin{tikzcd}
	& A \\
	I && J
	\arrow[from=2-1, to=1-2]
	\arrow["{{\iota_J = \epsilon_I}}"', from=2-1, to=2-3]
	\arrow[from=2-3, to=1-2]
\end{tikzcd}\]
    Dually, one can define \textit{double comonoids}. 

    A morphism between two double (co)monoids is a morphism of the two underlying (co)monoids. 
    
    The categories of double (co)monoids in the 2-monoidal category $(\catname{C},\square, I, \circ, J)$ are denoted by $\text{dMon}(\catname{C},\square, I, \circ, J)$ and by $\text{dComon}(\catname{C},\square, I, \circ, J)$, respectively.
\end{definition}

The 2-monoidal structure of the category of symmetric sequences allows us to relate different categories of monoids, comonoids, bimonoids, and double (co)monoids to each other through categorical equivalences. We denote by $\text{Mon}(\C,\otimes,\textbf{1})$ the category of monoids in the monoidal category $(\C,\otimes,\textbf{1})$ and by $\text{Comon}(\C,\otimes,\textbf{1})$ the corresponding category of comonoids. The next result follows from Proposition 6.36 of \cite{Aguiar2010}.

\begin{corollary}[\cite{Aguiar2010}]
    There are canonical equivalences of categories $$\text{Bimon}(\catname{sSeq},\square,\circ) \cong \text{Comon}(\text{Mon}(\catname{sSeq},\square),\circ) \cong \text{Mon}(\text{Comon}(\catname{sSeq},\circ),\square),$$ $$\text{dMon}(\catname{sSeq},\square,\circ) \cong \text{Mon}(\text{Mon}(\catname{sSeq},\square),\circ),$$ $$\text{dComon}(\catname{sSeq},\square,\circ) \cong \text{Comon}(\text{Comon}(\catname{sSeq},\circ),\square).$$
\end{corollary}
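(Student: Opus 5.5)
This is a direct application of Proposition 6.36 of \cite{Aguiar2010}, once Theorem \ref{2-monoidal} is in hand. That proposition holds in an arbitrary 2-monoidal category $(\C,\square,I,\circ,J)$, so the plan is to instantiate it with $(\catname{sSeq},\square,\mathcal{J},\circ,\mathcal{J})$, taking the interchange law $\zeta=\sigma$ and $\Delta_{\mathcal{J}}$, $\mu_{\mathcal{J}}$, $\iota_{\mathcal{J}}=\epsilon_{\mathcal{J}}=\id_{\mathcal{J}}$ as in the proof of Theorem \ref{2-monoidal}. Nothing specific to symmetric sequences is needed beyond the 2-monoidal axioms verified there.

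First I would recall the two lifting statements underlying Proposition 6.36:
\begin{itemize}
\item Using $\zeta$ and $\mu_J$, the monoidal structure $(\circ,J)$ lifts to $\text{Mon}(\C,\square,I)$. If $A,B$ are $\square$-monoids, then $A\circ B$ has multiplication $(A\circ B)\square(A\circ B)\xrightarrow{\zeta}(A\square A)\circ(B\square B)\to A\circ B$. This is precisely Corollary \ref{corollary_source_Day_operads}.
\item Dually, using $\zeta$ and $\Delta_I$, the monoidal structure $(\square,I)$ lifts to $\text{Comon}(\C,\circ,J)$.
\end{itemize}
The associativity axioms (\ref{diagram_2-monoidal_associativity_1}) and (\ref{diagram_2-monoidal_associativity_2}) make the lifted associators monoid (resp.\ comonoid) morphisms. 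The unitality axioms (\ref{diagram_2-monoidal_unitality_1}) and (\ref{diagram_2-monoidal_unitality_2}), together with the compatibility of units, do the same for the unitors. Hence both lifted categories are monoidal, and the expressions $\text{Comon}(\text{Mon}(\catname{sSeq},\square),\circ)$ and $\text{Mon}(\text{Comon}(\catname{sSeq},\circ),\square)$ make sense.

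Next I would unwind definitions. A comonoid in $(\text{Mon}(\C,\square),\circ)$ is a $\square$-monoid $H$ with comultiplication $\Delta:H\to H\circ H$ and counit $\epsilon:H\to J$ which are \emph{monoid morphisms}. Saying that $\Delta$ is multiplicative is exactly the first bimonoid diagram. Saying that $\Delta$ is unital is the diagram with $\Delta_I$ and $\iota\circ\iota$. For $\epsilon$, multiplicativity is the square with $\mu_J$, and unitality is the triangle $\epsilon\iota=\iota_J$. Morphisms agree on both sides, since in each case they are maps that are simultaneously monoid and comonoid morphisms. This yields the first equivalence, which is in fact an isomorphism of categories. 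The second equivalence follows by the symmetric reading, in which the comonoid structure maps become morphisms of comonoids.

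The double (co)monoid statements are handled in the same way. A monoid in $(\text{Mon}(\C,\square),\circ)$ has a $\circ$-multiplication and unit that are $\square$-monoid morphisms, and the resulting diagrams are precisely the double monoid axioms. The dual argument gives the statement for double comonoids.

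The only delicate step is bookkeeping: one must check that each of the listed bimonoid and double-monoid diagrams matches exactly one morphism condition, including the degenerate ones involving $J\square J$ and $I\circ I$. In our case these are harmless, because $I=J=\mathcal{J}$ and the structure maps are the coherence isomorphisms. Since all of this is done in \cite{Aguiar2010}, the proof reduces to citing Proposition 6.36 together with Theorem \ref{2-monoidal}.
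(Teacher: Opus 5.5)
Your proposal is correct and takes the same route as the paper, which obtains the statement directly from Proposition 6.36 of \cite{Aguiar2010} applied to the 2-monoidal structure of Theorem \ref{2-monoidal}; your unwinding of the definitions makes that citation explicit. Two small slips do not affect the argument: lifting $(\circ,\mathcal{J})$ to $\square$-monoids also uses $\Delta_I$, to define the unit $I\to I\circ I\to A\circ B$; and in $\text{Mon}(\text{Comon}(\catname{sSeq},\circ),\square)$ it is the $\square$-monoid structure maps that must be $\circ$-comonoid morphisms, not the comonoid structure maps.
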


\subsection{Operads are Day-operads} As a consequence of the machinery introduced in Section \ref{section_main_theorem}, we are able to prove the following lemma.

\begin{lemma} \label{Interchange_isomorphisms}
    The interchange law is compatible with isomorphisms, in the sense that the following diagram commutes.
\[\begin{tikzcd}
	{(\PP \circ \mathcal{J}) \square (\mathcal{J} \circ \PP)} && {(\PP \square \mathcal{J} ) \circ (\mathcal{J} \square \PP)} \\
	{\PP \square \PP} && {\PP \circ \PP} \\
	{(\mathcal{J} \circ \PP) \square (\PP \circ \mathcal{J})} && {(\mathcal{J} \square \PP) \circ (\PP \square \mathcal{J})}
	\arrow["{{\sigma_{\PP,\mathcal{J},\mathcal{J},\PP}}}", from=1-1, to=1-3]
	\arrow["\cong", from=1-3, to=2-3]
	\arrow["\cong", from=2-1, to=1-1]
	\arrow["\cong"', from=2-1, to=3-1]
	\arrow["{{\sigma_{\mathcal{J},\PP,\PP,\mathcal{J}}}}"', from=3-1, to=3-3]
	\arrow["\cong"', from=3-3, to=2-3]
\end{tikzcd}\]
\end{lemma}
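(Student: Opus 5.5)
We pass through the equivalence of Lemma \ref{equivalence} and compute both composites pointwise on the Kan-extension side, exactly as in the proof of Theorem \ref{2-monoidal}. Let $\Phi$ be the multiplicative functor corresponding to $\PP$ and $\Gamma$ the unit functor of part 3) of Lemma \ref{operations_kan}, corresponding to $\mathcal{J}$. By Corollary \ref{nicecorollary}, $\sigma_{\PP,\mathcal{J},\mathcal{J},\PP}$ is induced by
$$t_{\Phi,\Gamma,\Gamma,\Phi}\colon \Phi(f)\times\Gamma(g)\times\Gamma(f')\times\Phi(g') \to \Phi(f)\times\Gamma(f')\times\Gamma(g\times_R S')\times\Phi(S\times_R g'),$$
with $(x,y,x',y')\mapsto (x,x',\delta_\Gamma(y),\delta_\Phi(y'))$. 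Similarly, $\sigma_{\mathcal{J},\PP,\PP,\mathcal{J}}$ is induced by $t_{\Gamma,\Phi,\Phi,\Gamma}$. The factors valued in $\Gamma$ are nonempty only when the corresponding map is a bijection. So the only objects of $\catname{Cospan(Comp)}$ that contribute are those with $g$ and $f'$ bijections in the first case, and those with $f$ and $g'$ bijections in the second. The plan is to restrict both transformations to these subcategories and compare them there.

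In the first case, $g$ and $f'$ are bijections, so the composite $S\times_R S' \to R$ is isomorphic to $f$. Composing this map with $S\times_R g'$ gives, up to the canonical isomorphism, the composite of the bijection $g\times_R S'$ with a map isomorphic to $g'$, read through $f$. We then identify the two ends. The unitor isomorphism $\PP\square\PP\cong(\PP\circ\mathcal{J})\square(\mathcal{J}\circ\PP)$ corresponds, on the Kan-extension side, to the inclusion of the data $(f,\id,\id,g')$ where $g'$ maps into $S$ along $f'=\id$. The isomorphism $(\PP\square\mathcal{J})\circ(\mathcal{J}\square\PP)\cong\PP\circ\PP$ corresponds to discarding the $\Gamma$-factors, which equal $\textbf{1}$. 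Under these identifications the top composite sends a pair $(x,y')\in\Phi(f)\times\Phi(g')$ to $(x,\delta_\Phi(y'))$. Here $\delta_\Phi$ is induced by the canonical bijection $g^{-1}(s)\times\{s'\}\cong g^{-1}(s)$, and it is the reindexing of $y'$ along $S\times_R S'\cong S'$. The bottom composite is symmetric: $f$ and $g'$ are now the bijections, $\delta_\Phi$ acts on the $\PP$-factor of $\Phi(g)$ instead, and the result is the same pair, reindexed along the canonical isomorphism.

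Both composites are therefore the canonical map induced by the pair of elements in $\Phi(\text{outer})\times\Phi(\text{inner})$, read through the canonical isomorphism of fibered products. The equality then follows from the uniqueness of natural isomorphisms between left Kan extensions, as used in Lemmas \ref{lemma_left_kan_1} and \ref{lemma_left_kan_2}.

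The main obstacle is this last identification. One must check that the unitors of $\square$ and $\circ$ translate, under Lemma \ref{operations_kan}, into exactly the canonical fibered-product isomorphisms $S\times_R R\cong S$ and $R\times_R S'\cong S'$. Only then do the two reindexings, one applied to the outer factor and the other to the inner factor, match the same element of $\PP\circ\PP$. I would verify this by evaluating at corollas $\{1,\dots,n\}\to\{1\}$ and checking equivariance by hand. Since the $\delta$ maps are built from $\Phi$ applied to canonical bijections, the required coherence should reduce to functoriality of $\Phi$ together with the uniqueness of these canonical bijections.
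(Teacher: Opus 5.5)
Your route is the paper's: pass to multiplicative functors, realize $\sigma$ by $t$, use $\Gamma$ to restrict to the objects of $\catname{Cospan(Comp)}$ whose $\mathcal{J}$-slots are bijections, and compare elementwise. Your bookkeeping, with $g,f'$ bijective in the top row and $f,g'$ bijective in the bottom row, is in fact more accurate than the paper's reduction to all four maps being bijective.

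The step that fails is ``the bottom composite is symmetric \dots\ the result is the same pair.'' The two rows put different factors of $\mathcal{P}\square\mathcal{P}$ on the outside. Let $p\in\Phi(h)$ and $q\in\Phi(h')$ lie over a cospan $h\colon S\to R$, $h'\colon S'\to R$.
\begin{itemize}
\item On top, $(r^\circ)^{-1}\square(l^\circ)^{-1}$ places $(p,1,1,q)$ at $((h,\id_S),(\id_R,h'))$, and $t_{\Phi,\Gamma,\Gamma,\Phi}$ returns $(p,1,1,\delta_\Phi(q))$. Here $\delta_\Phi$ is not a reindexing along $S\times_R S'\cong S'$; with $f'=\id_R$ one has $S\times_R S'\cong S$. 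It is a diagonal that puts one copy of $q$ over each point of $S$. So the top composite is $p\circ(q,\dots,q)$ over $S\times_R S'\to S\to R$.
\item On the bottom, $(l^\circ)^{-1}\square(r^\circ)^{-1}$ places $(1,p,q,1)$ at $((\id_R,h),(h',\id_{S'}))$, and $t_{\Gamma,\Phi,\Phi,\Gamma}(1,p,q,1)=(1,q,\delta_\Phi(p),1)$. This is $q\circ(p,\dots,p)$ over $S\times_R S'\to S'\to R$.
\end{itemize}
Take $R$ a point, $p\in\mathcal{P}(l)$ and $q\in\mathcal{P}(m)$ with $l\neq m$. The middle sets then have $l$ and $m$ elements. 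Morphisms of $\catname{Comp}$ are bijections on each set, so the coend computing $\mathcal{P}\circ\mathcal{P}$ never identifies the two elements. Even for $l=m=1$ you get $(p,q)$ versus $(q,p)$ in $\mathcal{P}(1)\times\mathcal{P}(1)$.

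Conceptually, by naturality the top row is a component of a natural map $A\square B\to A\circ B$, while the bottom row is a component of a natural map $A\square B\to B\circ A$. The uniqueness of the comparison isomorphisms in Lemmas \ref{lemma_left_kan_1} and \ref{lemma_left_kan_2} says nothing about two given maps $\mathcal{P}\square\mathcal{P}\to\mathcal{P}\circ\mathcal{P}$, so it cannot close this gap.

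The square commutes once the lower-left isomorphism is $(l^\circ)^{-1}\square(r^\circ)^{-1}$ precomposed with the symmetry $s\colon \mathcal{P}\square\mathcal{P}\to\mathcal{P}\square\mathcal{P}$. This is the braiding that appears in the hexagon in the proof of Corollary \ref{corollary_operad_Day}. With $s$ inserted, both composites are $p\circ(q,\dots,q)$. The remaining check is that the twist $S'\times_R S\cong S\times_R S'$, which is the identity on $S$ and $R$, is a morphism of $\catname{Comp}$ carrying $\delta_\Phi(q)\in\Phi(h'\times_R S)$ to $\delta_\Phi(q)\in\Phi(S\times_R h')$. That follows from functoriality of $\Phi$ on the canonical bijections $(h')^{-1}(h(s))\times\{s\}\cong\{s\}\times(h')^{-1}(h(s))$.

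The paper's own proof does not supply this step either. It takes the same route, and in its reduced diagram the two rows yield $(p,\delta_\Phi(q))$ and $(q,\delta_\Phi(p))$. Its closing appeal to ``the construction of $t$'' does not reconcile these.
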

\begin{proof}
    As before, we pass to the category of multiplicative functors and use the notation introduced in Lemma \ref{equivalence}. It therefore suffices to prove that the following diagram commutes for any $$((f,g),(f',g')) \in \catname{Cospan(\catname{Comp})},$$ where $S \xrightarrow{f}R, T \xrightarrow{g}S, S' \xrightarrow{f'}R,$ and $T' \xrightarrow{g'}S'$ are bijections. Note that if any of the latter is not a bijection, the definition of the functor $\Gamma$ makes the diagram commute trivially. Consider the diagram
\[\begin{tikzcd}[column sep=scriptsize]
	{\Phi_{\PP}(f) \times \Gamma(g) \times \Gamma(f') \times \Phi_{\PP}(g')} && {\Phi_{\PP}(f) \times \Gamma(f') \times \Gamma(g \times_R S') \times \Phi_{\PP}(S \times_R g')} \\
	{\Phi_{\PP}(fg) \times \Phi_{\PP}(f'g')} && {\Phi_{\PP}(f \times_R f') \times \Phi_{\PP}\hspace{-0.05cm}\left((g \times_R S') \times_{S \times_RS'}(S \times_R g')\right)} \\
	{\Gamma(f) \times \Phi_{\PP}(g) \times \Phi_{\PP}(f') \times \Gamma(g')} && {\Gamma(f) \times \Phi_{\PP}(f') \times \Phi_{\PP}(g \times_R S') \times \Gamma(S \times_R g')}
	\arrow["t", from=1-1, to=1-3]
	\arrow[from=1-3, to=2-3]
	\arrow[from=2-1, to=1-1]
	\arrow[from=2-1, to=3-1]
	\arrow["t"', from=3-1, to=3-3]
	\arrow[from=3-3, to=2-3]
\end{tikzcd}\]
where all morphisms are the evident ones. As the functor $\Gamma$ carries bijections to the monoidal unit (see Lemma \ref{operations_kan} for details), we can reduce the above diagram to the following one.
\begin{equation} \label{diagram_lemma_interchange} \begin{tikzcd}[column sep=scriptsize]
	{\Phi_{\PP}(f) \times \Phi_{\PP}(g')} && {\Phi_{\PP}(f) \times \Phi_{\PP}(S \times_R g')} \\
	{\Phi_{\PP}(fg) \times \Phi_{\PP}(f'g')} && {\Phi_{\PP}(f \times_R f') \times \Phi_{\PP}\hspace{-0.05cm}\left((g \times_R S') \times_{S \times_RS'}(S \times_R g')\right)} \\
	{\Phi_{\PP}(g) \times \Phi_{\PP}(f')} && {\Phi_{\PP}(f') \times \Phi_{\PP}(g \times_R S').}
	\arrow["t", from=1-1, to=1-3]
	\arrow[from=1-3, to=2-3]
	\arrow[from=2-1, to=1-1]
	\arrow[from=2-1, to=3-1]
	\arrow["t"', from=3-1, to=3-3]
	\arrow[from=3-3, to=2-3]
\end{tikzcd}\end{equation}
The two morphisms on the right are given by the maps of pairs represented in the following diagram
\[\begin{tikzcd}
	& {S \times_RS'} & {T \times_RS'} \\
	R & {S \times_RS'} & {S \times_RT'} \\
	R & {S'}
	\arrow["\id"', from=1-2, to=2-2]
	\arrow["{{{g \times_R S'}}}"', from=1-3, to=1-2]
	\arrow["\cong", from=1-3, to=2-3]
	\arrow["\id"', from=2-1, to=3-1]
	\arrow["{{{f \times_R f'}}}"', from=2-2, to=2-1]
	\arrow["{{{f \times_R S'}}}", from=2-2, to=3-2]
	\arrow["{{{S \times_R g'}}}", from=2-3, to=2-2]
	\arrow["{{{f'}}}", from=3-2, to=3-1]
\end{tikzcd}\]
where the map marked with $\cong$ is the obvious one that makes the square commute, all other maps being bijective. By the usual properties of the fiber product, the map $$(g \times_R S') \times_{S \times_RS'}(S \times_R g')$$ is isomorphic to $g \times_R g'$, and therefore Diagram (\ref{diagram_lemma_interchange}) commutes by the construction of $t$.
\end{proof}

The previous lemma shows that morphisms of the type $$\PP \square \PP \rightarrow \PP \circ \PP$$ provided by the interchange law are canonical. This will be useful in the proof of the next result that connects the notions of operad and of Day-operad. We note briefly that an analogous result holds for morphisms of the type $$\PP \square \mathcal{Q} \rightarrow \mathcal{P} \circ \mathcal{Q}$$ induced by the interchange law. The reasoning is left to the reader; it is purely technical.

\begin{corollary} \label{corollary_operad_Day}
    Every operad admits a natural Day-operad structure.
\end{corollary}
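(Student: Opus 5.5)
Given an operad $(\PP,\mu,\eta)$, I will take as Day-operad multiplication the composite
$$\mu^\square : \PP \square \PP \xrightarrow{\;\cong\;} (\PP \circ \mathcal{J}) \square (\mathcal{J} \circ \PP) \xrightarrow{\;\sigma_{\PP,\mathcal{J},\mathcal{J},\PP}\;} (\PP \square \mathcal{J}) \circ (\mathcal{J} \square \PP) \xrightarrow{\;\cong\;} \PP \circ \PP \xrightarrow{\;\mu\;} \PP.$$
The unit will be the same $\eta : \mathcal{J} \to \PP$, which is legitimate because both monoidal structures have unit $\mathcal{J}$. By Lemma \ref{Interchange_isomorphisms}, the middle map $\PP\square\PP\to\PP\circ\PP$ does not depend on whether we insert $\mathcal{J}$ as $(\PP\circ\mathcal{J})\square(\mathcal{J}\circ\PP)$ or as $(\mathcal{J}\circ\PP)\square(\PP\circ\mathcal{J})$. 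I will call this canonical map $\kappa_{\PP,\PP}$. Naturality of $\sigma$ and of the unitors makes $\kappa$ natural (the analogue $\kappa_{\PP,\mathcal{Q}}$ noted after the lemma), so any operad morphism $F$ satisfies $\mu(F\circ F)\kappa=\mu\kappa(F\square F)$. This gives functoriality, which is the meaning of ``natural''.

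Unitality is the easy step. Using the first description of $\kappa$, the composite $\mathcal{J}\square\PP \to \PP\square\PP \to \PP\circ\PP \to \PP$ reduces, after naturality of $\sigma$ in its first variable, to $\mathcal{J}\square\PP \cong \PP \cong \mathcal{J}\circ\PP \xrightarrow{\eta\circ\id} \PP\circ\PP \xrightarrow{\mu}\PP$. The operad unit axiom then gives the left unitor. This requires that $\kappa_{\mathcal{J},\PP}$ be the composite of unitors, which is the unitality diagram (\ref{diagram_2-monoidal_unitality_1}) with $\Delta_{\mathcal{J}}$ a coherence isomorphism. The right unit is symmetric, using the second description of $\kappa$ from Lemma \ref{Interchange_isomorphisms}.

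Associativity is the main obstacle. The plan is to show that both $\mu^\square(\mu^\square\square\id)$ and $\mu^\square(\id\square\mu^\square)$ equal $\mu(\mu\circ\id)\circ\kappa_3$, where $\kappa_3:\PP\square\PP\square\PP\to\PP\circ\PP\circ\PP$ is a canonical threefold interchange. For the left bracketing, write $\PP\square\PP\square\PP \cong ((\PP\circ\mathcal{J}\circ\mathcal{J})\square(\mathcal{J}\circ\PP\circ\mathcal{J}))\square(\mathcal{J}\circ\mathcal{J}\circ\PP)$. Then apply the associativity axioms (\ref{diagram_2-monoidal_associativity_1}) and (\ref{diagram_2-monoidal_associativity_2}), together with naturality of $\sigma$ to slide $\mu$ past the interchange. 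The right bracketing is handled the same way with the other placements of $\mathcal{J}$. Lemma \ref{Interchange_isomorphisms} is what allows the different $\mathcal{J}$-paddings to be identified.

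If this diagram chase becomes unwieldy, I would fall back on multiplicative functors. By Lemma \ref{equivalence} and the explicit formulas for $t$, $t^{(i)}$, $\delta$, with $\Gamma$ on bijections, every interchange map becomes an identification of $\Phi_{\PP}$ evaluated on isomorphic fiber products, namely $(g\times_R S')\times(S\times_R g')\cong g\times_R g'$ as in the proof of Lemma \ref{Interchange_isomorphisms}. The two associativity composites then agree pointwise because both reduce to a single application of $\mu$ on a threefold composite, and that is fixed by operad associativity.
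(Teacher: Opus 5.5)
Your proposal is correct. The multiplication $\mu\circ\kappa$ and the unitality check are exactly the paper's: naturality of $\sigma$, axiom (\ref{diagram_2-monoidal_unitality_1}) with $\Delta_{\mathcal{J}}$ a unitor, and the operad unit axiom. You handle associativity differently. The paper reduces to its hexagon labelled 1) and attacks it with axiom (\ref{diagram_2-monoidal_associativity_1}) alone, at $(\PP,\mathcal{J},\mathcal{J},\PP,\PP,\mathcal{J})$. That instance lands in $(\PP\square\PP)\circ\PP$, so linking it back to the hexagon requires the braiding $s$ of $\square$, Lemma \ref{Interchange_isomorphisms}, and the extension of that lemma to $\PP\square\mathcal{Q}$, which the paper leaves to the reader. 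Your threefold-interchange route uses both associativity axioms and can dispense with the braiding and the second description of $\kappa$. In effect it shows that $\kappa$ makes the identity a lax monoidal functor $(\catname{sSeq},\circ,\mathcal{J})\to(\catname{sSeq},\square,\mathcal{J})$, which holds in any 2-monoidal category with $I=J$ and $\Delta_I,\mu_J$ invertible. One correction: Lemma \ref{Interchange_isomorphisms} is not what matches the $\mathcal{J}$-paddings. That job is done by the unitality axioms (\ref{diagram_2-monoidal_unitality_1}) and (\ref{diagram_2-monoidal_unitality_2}), which make $\sigma_{\mathcal{J},\mathcal{J},C,D}$, $\sigma_{A,B,\mathcal{J},\mathcal{J}}$, $\sigma_{\mathcal{J},B,\mathcal{J},D}$ and $\sigma_{A,\mathcal{J},C,\mathcal{J}}$ unitor isomorphisms, and a single padding then serves both bracketings. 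Concretely, put $\lambda=\sigma_{\PP,\PP,\mathcal{J},\PP}\colon(\PP\circ\PP)\square\PP\to\PP\circ(\PP\square\PP)$. Axiom (\ref{diagram_2-monoidal_associativity_1}) at $(\PP,\mathcal{J},\mathcal{J},\PP,\mathcal{J},\PP)$ gives $\lambda(\kappa\square\id)=\kappa_{\PP,\PP\square\PP}\,a^\square$. Axiom (\ref{diagram_2-monoidal_associativity_2}) at $(\PP,\PP,\mathcal{J},\mathcal{J},\mathcal{J},\PP)$ gives $a^\circ\kappa_{\PP\circ\PP,\PP}=(\id\circ\kappa)\lambda$. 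Naturality of $\kappa_{\PP,-}$ then yields $a^\circ\kappa_{\PP\circ\PP,\PP}(\kappa\square\id)=\kappa_{\PP,\PP\circ\PP}(\id\square\kappa)\,a^\square$, and together with the operad pentagon this is all associativity of $\mu\kappa$ needs.
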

\begin{proof}
    Let $\PP$ be an operad. We precompose $\mu: \PP \circ \PP \rightarrow \PP$ with $\sigma: \PP \square \PP \rightarrow \PP \circ \PP$, where the latter may be understood in any of the two equivalent ways presented in Lemma \ref{Interchange_isomorphisms}. Hence, we obtain the Day-multiplication $\widetilde \mu: \PP \square \PP \rightarrow \PP$. Let us verify that all relevant diagrams commute using the fact that the category $(\catname{sSeq}, \square, \mathcal{J}, \circ, \mathcal{J})$ is $2$-monoidal. In fact, we need to show that the following diagrams commute. 
\[\begin{tikzcd}
	{(\PP \square \PP) \square \PP} && {\PP \square (\PP \square \PP)} \\
	{\PP \square \PP} && {\PP \square \PP} \\
	& \PP
	\arrow["{{{{a^\square_{\PP,\PP,\PP}}}}}", from=1-1, to=1-3]
	\arrow["{{{\widetilde \mu \square \id_{\PP}}}}"', from=1-1, to=2-1]
	\arrow["{{{\id_{\PP} \square \widetilde \mu}}}", from=1-3, to=2-3]
	\arrow["{\widetilde \mu}"', from=2-1, to=3-2]
	\arrow["{\widetilde \mu}", from=2-3, to=3-2]
\end{tikzcd}\]
\[\begin{tikzcd}
	{\mathcal{J} \square \PP} && {\PP \square \PP} && {\PP \square \mathcal{J}} && {\PP \square \PP} \\
	& \PP &&&& \PP
	\arrow["{{{\eta \square \id_{\PP}}}}", from=1-1, to=1-3]
	\arrow["{{{l^\square_{\PP}}}}"', from=1-1, to=2-2]
	\arrow["{\widetilde \mu}", from=1-3, to=2-2]
	\arrow["{{{\id_{\PP} \square \eta}}}", from=1-5, to=1-7]
	\arrow["{{{r^\square_{\PP}}}}"', from=1-5, to=2-6]
	\arrow["{\widetilde \mu}", from=1-7, to=2-6]
\end{tikzcd}\]
    Let us look at the next diagram.
\[\begin{tikzcd}[ampersand replacement=\&, column sep=tiny]
	{(\PP \square \PP) \square \PP} \&\& {(\PP \circ \PP) \square \PP} \& {1)} \& {\PP \square (\PP \circ \PP)} \&\& {\PP \square (\PP \square \PP)} \\
	{(\PP \circ \PP) \square \PP} \&\& {(\PP \circ \PP) \circ \PP} \&\& {\PP \circ (\PP \circ \PP)} \&\& {\PP \square (\PP \circ \PP)} \\
	\& {2)} \&\&\&\& {3)} \\
	{\PP \square \PP} \&\& {\PP \circ \PP} \& {4)} \& {\PP \circ \PP} \&\& {\PP \square \PP} \\
	\&\& {5)} \&\& {6)} \\
	\&\&\& \PP
	\arrow["{{{{\sigma_{\PP,\mathcal{J},\mathcal{J},\PP} \square \id_{\PP}}}}}"', from=1-1, to=1-3]
	\arrow["{{{{a^\square_{\PP,\PP,\PP}}}}}", curve={height=-30pt}, from=1-1, to=1-7]
	\arrow["{{{{\sigma_{\PP,\mathcal{J},\mathcal{J},\PP} \square \id_{\PP}}}}}"', from=1-1, to=2-1]
	\arrow["{{{{\sigma_{\PP \circ \PP,\mathcal{J},\mathcal{J},\PP}}}}}", from=1-3, to=2-3]
	\arrow["{{{{\sigma_{\mathcal{J},\PP,\PP \circ \PP,\mathcal{J}}}}}}"', from=1-5, to=2-5]
	\arrow["{{{{\id_{\PP} \square \sigma_{\mathcal{J},\PP,\PP, \mathcal{J}}}}}}", from=1-7, to=1-5]
	\arrow["{{{{\id_{\PP} \square \sigma_{\mathcal{J},\PP,\PP,\mathcal{J}}}}}}", from=1-7, to=2-7]
	\arrow["{{{{\sigma_{\PP \circ \PP,\mathcal{J},\mathcal{J},\PP}}}}}"', from=2-1, to=2-3]
	\arrow["{{{{\mu \square \id_{\PP}}}}}"', from=2-1, to=4-1]
	\arrow["{{{{a^\circ_{\PP,\PP,\PP}}}}}"', from=2-3, to=2-5]
	\arrow["{{{{\mu \circ \id_{\PP}}}}}", from=2-3, to=4-3]
	\arrow["{{{{\id_{\PP} \circ \mu}}}}"', from=2-5, to=4-5]
	\arrow["{{{{\sigma_{\mathcal{J},\PP,\PP \circ \PP,\mathcal{J}}}}}}", from=2-7, to=2-5]
	\arrow["{{{{\id_{\PP} \square \mu}}}}", from=2-7, to=4-7]
	\arrow["{{{{\sigma_{\PP,\mathcal{J},\mathcal{J},\PP}}}}}", from=4-1, to=4-3]
	\arrow["{{{\widetilde \mu}}}"', from=4-1, to=6-4]
	\arrow["\mu", from=4-3, to=6-4]
	\arrow["\mu"', from=4-5, to=6-4]
	\arrow["{{{{\sigma_{\mathcal{J},\PP,\PP,\mathcal{J}}}}}}"', from=4-7, to=4-5]
	\arrow["{{{\widetilde \mu}}}", from=4-7, to=6-4]
\end{tikzcd}\]
    The squares labeled 2) and 3) and the pentagon labeled 4) commute by naturality of $\sigma$ and the operad axioms for $\PP$. The triangles labeled 5) and 6) commute by the definition of $\widetilde \mu$. It remains to prove commutativity of the hexagon labeled 1). Consider the following diagram, in which we have omitted all unitors and identified isomorphic symmetric sequences accordingly. The identifications can be deduced from the labels on the morphisms.
\[\begin{tikzcd}
	{(\PP \square \PP) \square \PP} &&& {\PP \square (\PP \square \PP)} \\
	\\
	{(\PP \circ \PP) \square \PP} & {(\PP \circ \PP) \circ \PP} & {\PP \circ (\PP \circ \PP)} & {\PP \square (\PP \circ \PP)} \\
	\\
	{(\PP \square \PP) \circ \PP} &&& {(\PP \square \PP)\circ \PP}
	\arrow["a", color=red, from=1-1, to=1-4]
	\arrow["{{{{{\sigma_{\PP,\mathcal{J},\mathcal{J},\PP} \square \id}}}}}"', color=red, from=1-1, to=3-1]
	\arrow["{{{{{\id \square \sigma_{\mathcal{J},\PP,\PP,\mathcal{J}}}}}}}", color=red, from=1-4, to=3-4]
	\arrow["{{\sigma_{\PP \circ \PP, \mathcal{J},\mathcal{J},\PP}}}", color=red, from=3-1, to=3-2]
	\arrow["s"', curve={height=30pt}, from=3-1, to=3-4]
	\arrow["{{{{{\sigma_{\PP \square \mathcal{J}, \mathcal{J} \square \PP, \PP,\mathcal{J}}}}}}}"', from=3-1, to=5-1]
	\arrow["a", color=red, from=3-2, to=3-3]
	\arrow["{{\sigma_{\mathcal{J},\PP,\PP \circ \PP, \mathcal{J}}}}"', color=red, from=3-4, to=3-3]
	\arrow["{{{{{\sigma_{\PP,\mathcal{J},\mathcal{J} \square \PP, \PP \square \mathcal{J}}}}}}}", from=3-4, to=5-4]
	\arrow["a"', from=5-1, to=5-4]
\end{tikzcd}\]
Here $s$ denotes the braiding of the matrix monoidal structure, which corresponds to swapping sets in a product. The commutativity of the outer rectangle follows from Diagram (\ref{diagram_2-monoidal_associativity_1}) in Definition \ref{definition_2-monoidal}, together with Theorem \ref{2-monoidal}. The inner region of the diagram commutes by Lemma \ref{Interchange_isomorphisms} and the explicit description of the braiding. Finally, the bottom part commutes by the observation regarding the generalization of Lemma \ref{Interchange_isomorphisms}. Since the subdiagram highlighted in red is precisely hexagon 1), this completes the first part of the proof.

    Now, consider the diagram
\[\begin{tikzcd}
	{\mathcal{J} \square \PP} && {\PP \square \PP} \\
	{\mathcal{J} \circ \PP} && {\PP \circ \PP} \\
	\PP
	\arrow["{{\eta \square \id_{\PP}}}", from=1-1, to=1-3]
	\arrow["{{\sigma_{\mathcal{J}, \mathcal{J}, \mathcal{J}, \PP}}}", from=1-1, to=2-1]
	\arrow["{{l^\square_{\PP}}}"', curve={height=30pt}, from=1-1, to=3-1]
	\arrow["{{\sigma_{\PP, \mathcal{J}, \mathcal{J}, \PP}}}", from=1-3, to=2-3]
	\arrow["{{\eta \circ \id_{\PP}}}"', from=2-1, to=2-3]
	\arrow["{{l^\circ_{\PP}}}"', from=2-1, to=3-1]
	\arrow["\mu", from=2-3, to=3-1]
\end{tikzcd}\]
    and note that the square commutes by naturality of $\sigma$. The commutativity of the left triangle is an immediate consequence of the unitality diagrams appearing in Definition \ref{definition_2-monoidal} and the fact that $(\catname{sSeq}, \square, \mathcal{J}, \circ, \mathcal{J})$ is $2$-monoidal. The second triangle commutes because $\PP$ is an operad. This shows that the outer diagram commutes as well.

    Finally, we examine the diagram
\[\begin{tikzcd}
	{\PP \square \mathcal{J}} && {\PP \square \PP} \\
	{\PP \circ \mathcal{J}} && {\PP \circ \PP} \\
	\PP
	\arrow["{ \id_{\PP} \square \eta}", from=1-1, to=1-3]
	\arrow["{\sigma_{\PP, \mathcal{J}, \mathcal{J}, \mathcal{J}}}", from=1-1, to=2-1]
	\arrow["{r^\square_{\PP}}"', curve={height=30pt}, from=1-1, to=3-1]
	\arrow["{\sigma_{\PP, \mathcal{J}, \mathcal{J}, \PP}}", from=1-3, to=2-3]
	\arrow["{\id_{\PP} \circ \eta}"', from=2-1, to=2-3]
	\arrow["{r^{\circ}_{\PP}}"', from=2-1, to=3-1]
	\arrow["\mu", from=2-3, to=3-1]
\end{tikzcd}\]
    to get the last commutativity, we need to conclude that the multiplication $\widetilde \mu$ endows $\PP$ with the structure of a Day-operad.
\end{proof}

In \cite{Balteanu_iterated}, the authors introduce the notion of a 2-fold monoidal category. In the special case where the two monoidal units coincide, this definition aligns with that of a 2-monoidal category as formulated in \cite{Aguiar2010} (see Definition \ref{definition_2-monoidal}). As a result, the framework developed in \cite{Balteanu_iterated} applies to the category of symmetric sequences. In particular, we obtain the following consequence of Theorem \ref{2-monoidal}.

\begin{corollary}[\cite{Balteanu_iterated}] \label{corollary_2-fold_loop}
The group completion of the nerve of $(\catname{sSeq}, \square, \mathcal{J}, \circ, \mathcal{J})$ is a $2$-fold loop space.
\end{corollary}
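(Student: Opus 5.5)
The statement to prove is Corollary \ref{corollary_2-fold_loop}: the group completion of the nerve of $(\catname{sSeq}, \square, \mathcal{J}, \circ, \mathcal{J})$ is a $2$-fold loop space. I would reduce it to the main theorem of \cite{Balteanu_iterated}, which says that the group completion of the nerve of a $2$-fold monoidal category is a $2$-fold loop space. The whole proof then consists of checking that Theorem \ref{2-monoidal} gives a $2$-fold monoidal structure in their sense.

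First, I recall the definition of a $2$-fold monoidal category from \cite{Balteanu_iterated}. It is a category with two \emph{strict} monoidal structures $\otimes_1,\otimes_2$ sharing a \emph{strict} unit $0$. It also has a natural interchange $\eta\colon (A\otimes_2 B)\otimes_1(C\otimes_2 D)\to (A\otimes_1 C)\otimes_2(B\otimes_1 D)$, subject to internal and external associativity and to unitality with $\eta$ restricting to identities when units are plugged in.

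Second, I match this with Definition \ref{definition_2-monoidal}. Take $\otimes_1=\square$, $\otimes_2=\circ$ and $\eta=\zeta=\sigma$ from Theorem \ref{2-monoidal}. The common unit is $I=J=\mathcal{J}$, with $\Delta_{\mathcal{J}}$ and $\mu_{\mathcal{J}}$ the coherence isomorphisms and $\iota_{\mathcal{J}}=\epsilon_{\mathcal{J}}=\id$, exactly as in the proof of Theorem \ref{2-monoidal}. Diagrams (\ref{diagram_2-monoidal_associativity_1}) and (\ref{diagram_2-monoidal_associativity_2}) are then the external and internal associativity axioms. The unitality diagrams (\ref{diagram_2-monoidal_unitality_1}) and (\ref{diagram_2-monoidal_unitality_2}), with $\Delta,\mu$ being unitors, say precisely that $\eta_{A,B,\mathcal{J},\mathcal{J}}$ and $\eta_{\mathcal{J},\mathcal{J},C,D}$ agree with the canonical unitor isomorphisms. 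That is the Balteanu et al.\ unit condition, up to unitors.

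The main obstacle is strictness: $\square$ and $\circ$ are only weakly associative and unital. I would first try a strictification argument. Use a Mac Lane-type coherence theorem for $2$-monoidal (duoidal) categories, or equivalently replace $\catname{sSeq}$ by a monoidally equivalent category on which both products are strict simultaneously. Such an equivalence transports $\sigma$, and it preserves the homotopy type of the nerve because an equivalence of categories induces a homotopy equivalence of nerves. It also preserves the homotopy type of the group completion, since the induced $H$-space structures correspond. If a simultaneous strictification is not readily citable, I would instead note that the argument of \cite{Balteanu_iterated} only uses the coherence diagrams and goes through verbatim for weak structures. The main candidate for this is their operad-action construction on the nerve via the $2$-fold monoidal operad, which is $E_2$, followed by May's recognition principle. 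With that established, the recognition principle applies directly and the corollary follows.
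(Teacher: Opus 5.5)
Your proposal is correct and takes the same route as the paper. The paper simply observes that a 2-monoidal category whose two units coincide (with $\Delta_{\mathcal{J}},\mu_{\mathcal{J}}$ the coherence isomorphisms and $\iota_{\mathcal{J}}=\epsilon_{\mathcal{J}}=\id_{\mathcal{J}}$) is a 2-fold monoidal category in the sense of \cite{Balteanu_iterated}, and then invokes their theorem; it says nothing about strictness, so your discussion of that issue goes beyond the paper, though it remains a sketch.

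Two small remarks. First, diagram (\ref{diagram_2-monoidal_unitality_2}) is what gives the \emph{external} unit condition on $\zeta_{\mathcal{J},A,\mathcal{J},B}$ and $\zeta_{A,\mathcal{J},B,\mathcal{J}}$; you only mention the internal one, which comes from diagram (\ref{diagram_2-monoidal_unitality_1}). Second, the strictness worry can be bypassed altogether: $\catname{sSeq}$ has an initial object (the constant sequence at $\varnothing$), so its nerve, and hence its group completion, is contractible, and the statement holds for trivial reasons.
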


\bibliographystyle{alpha}
\bibliography{biblio}

\end{document}